\newtheorem{thm}{Theorem}[section]
\newtheorem{Lemma}[thm]{Lemma}
\newtheorem{cor}[thm]{Corollary}
\long\def\@makecaption#1#2{%
 \vskip\abovecaptionskip
  \sbox\@tempboxa{{#1.}\quad #2}%
   \ifdim \wd\@tempboxa >\hsize
    { #1.}\quad #2\par
     \else
  \global \@minipagefalse
   \hb@xt@\hsize{\hfil\box\@tempboxa\hfil}%
   \fi
   \vskip\belowcaptionskip}
\title{\vspace{0cm}{On the anti-Kelul\'{e} problem of cubic graphs}\footnote{This
 work is supported by NSFC (grant nos. 11401279 and 11371180), the Specialized Research Fund
 for the Doctoral Program of Higher Education (No. 20130211120008), the Fundamental Research Funds for the Central Universities (no. lzujbky-2016-102), and General Research Fund of Hong Kong.}}
\date{}
\begin{document}

\author{Qiuli Li$^{a}$,  Wai Chee Shiu$^{b}$\footnote{The
corresponding author.}, Pak Kiu Sun$^{b}$, and Dong Ye$^{c}$ \\
\small{$^{a}$School of Mathematics and Statistics, Lanzhou
University, Lanzhou,}\\ \small{Gansu 730000, China}\\
\small{E-mail address:  qlli@lzu.edu.cn}\\
\small{$^{b}$Department of Mathematics, Hong Kong Baptist University,}\\
\small{Kowloon Tong, Hong Kong,
China} \\
\small{E-mail addresses: wcshiu@hkbu.edu.hk, lionel@hkbu.edu.hk}\\
\small{$^{c}$Department of Mathematical Sciences, Middle Tennessee State University,}\\
\small{Murfreesboro, TN 37132, United States} \\
\small{E-mail address: dong.ye@mtsu.edu} }

\maketitle

\begin{abstract}
 An edge set $S$ of a connected graph $G$ is  called an {\em anti-Kekul\'e set} if $G-S$ is connected and has no perfect matchings, where $G-S$ denotes the subgraph obtained by deleting all edges in $S$ from $G$. The {\em anti-Kekul\'e number} of a graph $G$, denoted by $ak(G)$, is the cardinality of a smallest anti-Kekul\'e set of $G$.  It is NP-complete to find the smallest anti-Kekul\'e set of a graph. In this paper, we
show that the anti-Kekul\'{e} number of  a 2-connected
cubic graph is either 3 or 4, and the anti-Kekul\'{e} number
of a connected cubic bipartite graph is always equal to 4.
Furthermore, a polynomial time algorithm is given to find all smallest anti-Kekul\'{e} sets of a connected cubic graph.

Keywords: Anti-Kekul\'{e} set, anti-Kekul\'{e} number, cubic graphs 
\def\MSC{\par\leavevmode\hbox{\em AMS 2010 MSC:\ }}%
\MSC 05C10, 05C70, 05C90
\end{abstract}

\section{Introduction}

 Let $G$ be a graph. A {\em perfect matching} of a graph $G$ is a set of non-adjacent edges that covers all vertices of $G$. A perfect matching of a graph is also called a {\em Kekul\'e structure} in mathematical chemistry and statistical physics. An edge set $S$ of a connected graph $G$ is called  an {\em anti-Kekul\'e set} if $G-S$ is connected and has no perfect matchings, where $G-S$ denotes the subgraph obtained by deleting all edges in $S$ from $G$. The {\em anti-Kekul\'e number} of a graph $G$, denoted by $ak(G)$, is the cardinality of a smallest anti-Kekul\'e set of $G$. The anti-Kekul\'e number of a graph is hard to determine in general. It has been proved recently that it is NP-complete to determine the anti-Kekul\'e number of a connected bipartite graph by L\"u, Li and Zhang~\cite{Lv16}.




In chemistry and physics, graphs are used to represent the skeletons of molecules, and Kekul\'e structures (or perfect matchings) are used to model special structures of bonds between atoms. For example, for a benzenoid hydrocarbons, graphens or fullerenes, a Kekul\'e structure of these molecules stands for double bonds between atoms. An anti-Kekul\'e set is a set of double bonds whose removal significantly affects the whole molecule structure by the valence bond (VB) theory (cf. \cite{DKlein}). 

A fullerene is a 3-connected plane cubic graph such that every face is either a hexagon or a pentagon. For example, $\text{C}_{60}$ is a fullerene with 60 vertices such that all pentagons are disjoint. The anti-Kekul\'e number of $\mbox{C}_{60}$ is proved to be 4 by
Vuki\v{c}evi\'{c} 
\cite{vuki071}. A leapfrog fullerene is a fullerene obtained by leapfrog operation. Kutnar et al.  \cite{Kutnar09} obtained a bound for the anti-Kekul\'{e} number
of  leapfrog fullerenes as follows.

\begin{thm}[\cite{Kutnar09}]\label{thm:leap}
Let $G$ be a leapfrog fullerene. Then $3\leq ak(G) \leq 4$.
\end{thm}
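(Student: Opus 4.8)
The plan is to prove the two inequalities independently: the upper bound by exhibiting an explicit anti-Kekul\'e set of size $4$, and the lower bound by showing that no edge set of size at most $2$ can be an anti-Kekul\'e set, using the extendability of perfect matchings in fullerene graphs. Throughout, the only properties of $G$ we use are that a leapfrog fullerene is a $3$-connected cubic planar graph all of whose faces are pentagons or hexagons; in particular $G$ is bridgeless and has girth $5$, so it contains neither a triangle nor a quadrilateral (a consequence of the cyclic $5$-edge-connectivity of fullerenes).

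For the upper bound $ak(G)\le 4$, choose any vertex $v$ with neighbours $a,b,c$. Let $a_1,a_2$ be the two neighbours of $a$ other than $v$, and $b_1,b_2$ the two neighbours of $b$ other than $v$. Since $G$ has no triangle and no quadrilateral, $a_1,a_2,b_1,b_2$ are four distinct vertices, and none of them lies in $\{v,a,b,c\}$ --- any such coincidence would produce a triangle or a $4$-cycle through $v$. Set $S=\{aa_1,aa_2,bb_1,bb_2\}$, an edge set of size $4$. In $G-S$ the vertices $a$ and $b$ have degree $1$ with unique neighbour $v$, so every perfect matching of $G-S$ would have to contain both $av$ and $bv$, which is impossible; hence $G-S$ has no perfect matching. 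Moreover $G-S$ is connected, because $G-\{a,b\}$ is connected ($G$ being $3$-connected) and $a,b$ are still joined to $v$ in $G-S$. Therefore $S$ is an anti-Kekul\'e set, giving $ak(G)\le 4$. (This argument uses only $3$-connectivity, cubicity, and girth at least $5$, so it yields $ak\le 4$ for every fullerene, not just leapfrog ones.)

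For the lower bound $ak(G)\ge 3$, first note that $G$ has a perfect matching by Petersen's theorem, so $\varnothing$ is not an anti-Kekul\'e set. For a single edge $e=xy$: by the classical fact that every edge of a bridgeless cubic graph lies in a perfect matching, pick a neighbour $x'\ne y$ of $x$ and a perfect matching $M$ containing $xx'$; then $M$ avoids $e$, so $M$ is a perfect matching of $G-e$. For two edges $\{e,f\}$: if $e$ and $f$ share a vertex $v$, apply the same fact to the third edge at $v$ to obtain a perfect matching avoiding both; if $e=x_1x_2$ and $f=y_1y_2$ are disjoint, use that fullerene graphs are $2$-extendable --- after checking, again via the absence of triangles and quadrilaterals, that one can choose $x_1'\ne x_2$ adjacent to $x_1$ and $y_1'\ne y_2$ adjacent to $y_1$ with $x_1x_1'$ and $y_1y_1'$ independent --- and extend $\{x_1x_1',y_1y_1'\}$ to a perfect matching, which then avoids both $e$ and $f$. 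In every case $G-\{e,f\}$ (which is connected, being $G$ minus two edges) has a perfect matching, so $ak(G)\ge 3$.

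The only inputs beyond elementary counting are the existence of a perfect matching in bridgeless cubic graphs, the fact that every edge of such a graph lies in a perfect matching, and the $2$-extendability of fullerenes. I expect the main obstacle to be the disjoint-edge case of the lower bound: one must verify that the edges $x_1x_1'$ and $y_1y_1'$ incident to $e$ and $f$ can always be chosen to form an independent pair, which is precisely where girth $5$ is used. Finally, although it is not needed for the bounds, the leapfrog structure --- $Le(H)$ carries a canonical perfect matching consisting of the edges dual to $E(H)$, with each hexagon arising from a vertex of $H$ being alternating --- is the natural tool for deciding, in a concrete leapfrog fullerene, whether $ak(G)$ equals $3$ or $4$.
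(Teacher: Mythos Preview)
Your argument is correct. The upper bound construction is essentially identical to the paper's (the paper also removes the four edges incident with two neighbours of a fixed vertex), though your justification of connectedness via $3$-connectivity of $G-\{a,b\}$ is slightly cleaner than the paper's edge-cut case analysis. The lower bound, however, is obtained by a genuinely different route. The paper deduces Theorem~\ref{thm:leap} as a special case of its Theorem~\ref{cubic}, whose lower bound $ak(G)\ge 3$ is proved for \emph{every} $2$-connected cubic graph by a Tutte-theorem counting argument: one takes a maximal Tutte set $S$ for $G-A$, shows $G-A-S$ has exactly $|S|+2$ odd components, and then a parity/edge-count argument forces $|A|\ge 3$. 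Your proof instead handles sets of size $1$ and $2$ directly, invoking the fact that every edge of a bridgeless cubic graph lies in a perfect matching together with the $2$-extendability of fullerenes. This is perfectly valid and arguably closer in spirit to the original \cite{Kutnar09}, but it imports the $2$-extendability of fullerenes as a black box and applies only to fullerenes, whereas the paper's argument is self-contained and yields the same bound for all $2$-connected cubic graphs. Incidentally, for the independent-pair check in your disjoint-edge case, absence of triangles alone suffices; the quadrilateral-free hypothesis is not actually needed there.
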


The above result was improved by Yang et
al. \cite{Yang12} by proving that all fullerenes have anti-Kekul\'{e} number 4.
The anti-Kekul\'{e} numbers of other interesting graphs,
such as benzenoid hydrocarbons \cite{cai13, vuki08},
fence graphs \cite{tang11}, infinite triangular, rectangular
and hexagonal grids \cite{veljan08} as well as cata-condensed phenylenes \cite{Bian11},
 have been investigated.

The result on fullerenes has been generalized to general cubic graphs with
high cyclic edge-connectivity in \cite{Ye}. A graph $G$ is cyclically $k$-edge-connected if $G$ cannot be separated into two
components, each containing a cycle, by deletion of fewer than $k$ edges.
The \emph{cyclic edge-connectivity} $c\lambda(G)$ of a graph
$G$ is the maximum $k$ such that  $G$ is cyclically $k$-edge-connected.
An edge set $S$ is called an  \emph{odd cycle edge-transversal} of a cubic graph $G$ if $G-S$ is bipartite. The size of a smallest odd cycle edge-transversal of $G$ is denoted by $\tau_{\text{odd}}(G)$.



\begin{thm}[\cite{Ye}]\label{ye13}
Let $G$ be a cyclically $4$-edge-connected cubic graph.  Then either $ak(G) = 4$ or $1 \leq \tau_{odd}(G) \leq 3$.
\end{thm}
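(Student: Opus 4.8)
The plan is to prove three facts about a cyclically $4$-edge-connected cubic graph $G$ (the case $G=K_4$ being checked directly, so we may assume $G\neq K_4$, whence $G$ is $3$-connected and triangle-free): that $ak(G)\le 4$, that $ak(G)\ge 3$, and that $ak(G)=3$ forces $1\le\tau_{odd}(G)\le 3$. These give the theorem: $ak(G)\in\{3,4\}$, and in the case $ak(G)=3$ the second alternative holds. For the upper bound $ak(G)\le 4$, pick a vertex $w$ and two of its neighbours $x_1,x_2$; since $G$ is triangle-free, $x_1x_2\notin E(G)$, so the four edges incident with $\{x_1,x_2\}$ other than $wx_1,wx_2$ are distinct — take $S$ to be these four edges. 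In $G-S$ both $x_1$ and $x_2$ are pendant vertices attached to $w$, so in any perfect matching each would have to be matched to $w$, which is impossible; and $G-S$ is connected since $G-\{x_1,x_2\}$ is connected (as $G$ is $3$-connected) and $x_1,x_2$ re-attach to it. Hence $S$ is an anti-Kekul\'e set of size $4$.

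The core is the following claim: if $S$ is any edge set with $H:=G-S$ connected and with no perfect matching, and $|S|\le 3$, then in fact $|S|=3$ and $H$ is a connected bipartite graph with unbalanced parts. To prove it I take the Gallai--Edmonds decomposition $(D,A,C)$ of $H$, with $D_1,\dots,D_k$ the components of $H[D]$. Each $D_i$ is factor-critical, hence of odd order and, if $|D_i|\ge 3$, $2$-edge-connected (so it contains a cycle), while $H[C]$ has a perfect matching, so its components are even; since $H$ has no perfect matching and $|V(G)|$ is even the deficiency $k-|A|$ is a positive even number, so $k\ge|A|+2$. The main step is a double count of $\sum_i|\partial_G D_i|$, the number of edges of $G$ leaving the pieces $D_i$. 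Every such edge joins $D_i$ to $A$, to $C$, or to some $D_j$ with $j\neq i$; and since $N_H(D)\setminus D=A$ and the $D_i$ are distinct components of $H[D]$, every edge of the last two kinds lies in $S$. Thus $\sum_i|\partial_G D_i|=e_G(D,A)+\varepsilon$ with $e_G(D,A)\le 3|A|$, where $\varepsilon$ counts the $D$--$C$ edges of $G$ once and the $D_i$--$D_j$ edges twice, so $\varepsilon\le 2|S|\le 6$. On the other hand a singleton $D_i$ contributes exactly $3$ and a non-trivial $D_i$ contributes $|\partial_G D_i|\ge 5$: this number is odd (as $G$ is cubic and $|D_i|$ odd), it is not $1$ ($G$ has no bridges), and it is not $3$, because a $3$-edge cut around a non-trivial factor-critical $D_i$ would be a cyclic $3$-edge cut — the complement of $D_i$ also contains a cycle by an elementary edge count — contradicting cyclic $4$-edge-connectivity. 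Writing $j$ for the number of non-trivial $D_i$ we obtain $3k+2j\le 3|A|+6$; together with $k\ge|A|+2$ this forces $j=0$ and equality throughout, so $k=|A|+2$, $e_G(D,A)=3|A|$, and $\varepsilon=6$. In particular $|S|\le 2$ is impossible, which already gives $ak(G)\ge 3$.

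Reading off the equality case: $\varepsilon=6=2|S|$ forces $|S|=3$ with each edge of $S$ joining two distinct (necessarily singleton) $D_i$ and with no $D$--$C$ edge; $e_G(D,A)=3|A|$ forces every vertex of $A$ to send all its edges into $D$, so $A$ is independent and has no edge to $C$; hence $C=\varnothing$ by connectivity of $H$. Therefore $V(G)=A\cup D$, all edges of $G$ run between $A$ and $D$ except the three edges of $S$, which lie inside $D$, so $H=G-S$ is a connected bipartite graph with parts $A$ and $D$ of sizes $|A|$ and $|A|+2$. Consequently $S$ meets every odd cycle of $G$, so $\tau_{odd}(G)\le 3$; and $G$ cannot be bipartite, since a connected bipartite cubic graph has parts of equal size while the unique bipartition $\{A,D\}$ of $H$ — which would be that of $G$ if $G$ were bipartite — is unbalanced, so $\tau_{odd}(G)\ge 1$. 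Applying the claim to a smallest anti-Kekul\'e set when $ak(G)=3$ finishes the proof. I expect the delicate point to be the estimate $|\partial_G D_i|\ge 5$ for non-trivial factor-critical pieces, which is precisely where cyclic $4$-edge-connectivity (no bridges, no cyclic $3$-edge cuts) enters, together with getting the double count and the equality analysis exactly right; the Gallai--Edmonds decomposition does the essential work of ensuring that the odd components to be handled are factor-critical.
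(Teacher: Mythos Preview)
The paper does not prove Theorem~\ref{ye13}; it is quoted from~\cite{Ye} as background, so there is no proof here to compare yours against.

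Your argument is nonetheless correct. The upper bound $ak(G)\le 4$ and the double-count giving $ak(G)\ge 3$ parallel the paper's proof of Theorem~\ref{cubic} for $2$-connected cubic graphs, but you organise the lower bound through the Gallai--Edmonds decomposition rather than a bare Tutte set. That is exactly what buys the additional conclusion about $\tau_{odd}$: because the pieces $D_i$ are factor-critical, any non-singleton $D_i$ is $2$-edge-connected and hence already carries a cycle, so cyclic $4$-edge-connectivity upgrades the parity bound $|\partial_G D_i|\ge 3$ (which is all the paper's Tutte argument uses) to $|\partial_G D_i|\ge 5$. This sharper estimate forces $j=0$ and equality throughout, and your reading of the equality case ($\varepsilon=6=2|S|$, $e_G(D,A)=3|A|$, hence $C=\varnothing$ and $H$ bipartite on the unbalanced parts $A$ and $D$) is correct. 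One small point worth making explicit: the ``elementary edge count'' showing that $V(G)\setminus D_i$ contains a cycle needs $|V(G)\setminus D_i|\ge 3$; this follows since $|V(G)|-|D_i|$ is odd and the case $|V(G)|-|D_i|=1$ is ruled out by $k\ge |A|+2$ together with the connectedness of $H$.
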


 The result above can be used to determine the anti-Kekul\'{e} number of fullerenes. Since the smallest odd cycle-transversal of a fullerene graph contains  at least 6 edges and the cyclic edge-connectivity
of a fullerene graph is 5 (see \cite{Tom, qi08}), Theorem~\ref{ye13} implies that every fullerene has anti-Kekul\'{e} number 4. However, Theorem~\ref{ye13} is not applicable to determine the anti-Kekul\'e numbers of some interesting graphs, such
as, 
 some boron-nitrogen fullerenes with low cyclic edge-connectivity, (3,6)-fullerenes etc.

A $(k, 6)$-cage ($k\ge 3$) is a 3-connected cubic planar graph
whose faces are either $k$-gons or hexagons. Do\v{s}li\'{c}  \cite{Tom} shows that $(k, 6)$-cages only exist for $k = 3$, 4 and 5.
A fullerene is a $(5,6)$-cage and the (4,6)-cages and $(3,6)$-cages are usually called (4,6)-fullerenes (or boron-nitrogen fullerens) and (3,6)-fullerenes, respectively.
Many researches have been done to investigate the properties of these graphs in both mathematics and chemistry, such as hamilitionian \cite{Goodey77, Goodey75}, resonance \cite{Rui12, ye09, zhang10},  the forcing matching number \cite{jiang11, zhang101}, and
energy spectra of (3,6)-fullerenes \cite{devos09, john09} which determines their electronic and magnetic
properties \cite{ceul02, szopa03}.

The  cyclic edge-connectivity of $(k,6)$-cages has been obtained by Do\v{s}li\'{c} in
\cite{Tom}. Let $\mathcal{T}$ be a family of (4,6)-fullerenes which
consists of $n$ concentric layers of hexagons (i.e. each layer is a cyclic chain of three hexagons) and is capped on
each end by a cap formed by three quadrangles.


\begin{thm}[\cite{Tom}]\label{cage}
Let $G$ be a $(k, 6)$-cage. Then $c\lambda(G) = 3$ if $G \in  \mathcal{T}$, and  $c\lambda(G) = k$ otherwise.
\end{thm}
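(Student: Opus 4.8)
\medskip
\noindent\textit{Proof proposal.} The upper bounds are the routine half. For any $(k,6)$-cage $G$, take a $k$-gonal face $F$: since $G$ is cubic, $\partial F$ is an induced $k$-cycle and the $k$ edges with exactly one end on $F$ form an edge cut, one side being the cycle $\partial F$ and the other the rest of $G$ (which still contains a face, hence a cycle); so $c\lambda(G)\le k$. For $G\in\mathcal T$, each cap is a patch built from three quadrangles pairwise sharing an edge, the three shared edges meeting at one vertex; its boundary is a $6$-cycle carrying exactly three $2$-valent vertices, and the three edges leaving the cap at those vertices form a cyclic $3$-edge cut (a cap contains quadrangles, the rest contains the other cap), so $c\lambda(G)\le 3$. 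Finally, a $3$-connected cubic graph is $3$-edge-connected, so $c\lambda(G)\ge 3$ always; in particular $c\lambda(G)=3$ when $k=3$. It remains to prove $c\lambda(G)\ge k$ for $k\in\{4,5\}$ and to identify the extremal family when $k=4$.

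So let $k\in\{4,5\}$ and suppose $G$ has a cyclic $m$-edge cut $S$ with $3\le m\le k-1$; fix $S$ of minimum size and write $G-S=G_1\sqcup G_2$. A preliminary step is that $G$ has girth $k$ (no triangle, and no $4$-cycle when $k=5$): a shorter non-facial cycle would bound a disk partitioned into faces of size $\ge k$ whose boundary lengths cannot balance, and the few small graphs where this could fail (for instance the triangular prism) are not $(k,6)$-cages. Since $S$ has minimum size it is a minimal edge cut, so the separating curve meets each cut edge once and visits at most $m$ faces; each such face has ends on both sides (is ``split''), and a parity count shows a split face carries an even number of cut edges, so for $m=3$ there are exactly three split faces, each with two cut edges, and for $m=4$ at most four. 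After dealing separately with the few degenerate configurations (a vertex on two cut edges, a side containing a cut vertex), each $G_i$ is a plane patch: a disk whose interior vertices are cubic, whose inner faces are $k$-gons and hexagons, and whose boundary cycle has length $\ell_i$ and carries exactly $m$ vertices of degree $2$.

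Euler's formula plus an edge--face incidence count in $G_i$ yields, with $a_i$ the number of $k$-gonal inner faces,
\[
(6-k)\,a_i=\ell_i+6-2m .
\]
Every $k$-gon of $G$ lies in $G_1$, lies in $G_2$, or is split, and the total number of $k$-gons equals $12/(6-k)$; summing the two identities and comparing with this total gives a tight relation among $a_1,a_2,\ell_1,\ell_2$ and the number of split $k$-gons. Taking $G_1$ to be the side with fewer $k$-gons, one gets $a_1\le 6/(6-k)$, hence $k\le\ell_1\le 2m\le 2(k-1)$; so $\ell_1$ has only a few possible values ($\ell_1\in\{5,6\}$ if $k=5,m=3$; $\ell_1\le 8$ if $k=5,m=4$; $\ell_1\in\{4,6\}$ if $k=4$, where necessarily $m=3$ and $\ell_1=2a_1$). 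The problem thus reduces to classifying patches with these few boundary lengths.

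This classification is the heart of the matter and the main obstacle. For $k=5$ one shows that no patch with pentagonal and hexagonal faces, boundary length $\le 8$, the prescribed number of pentagons, and $m$ boundary $2$-valent vertices exists: using that each $2$-valent boundary vertex has both its edges on the boundary cycle, the pentagons incident to it are forced, and propagating this around the boundary one is driven to create a triangle or a quadrangle, contradicting girth $5$. Hence no cyclic cut of size $\le 4$ exists, $c\lambda(G)=5$, and fullerenes are precisely the case $k=5$. For $k=4$ the same analysis leaves only $m=3$, three hexagonal split faces, and $\ell_1=\ell_2=6$, $a_1=a_2=3$, with each $G_i$ (again by the local forcing) the unique such patch, namely three quadrangles about a common vertex; the region separating the two caps is then tiled only by hexagons and bounded by two $6$-cycles, which forces it to be a stack of belts of three hexagons, so $G\in\mathcal T$. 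Together with the upper bound this gives $c\lambda(G)=3$ exactly when $G\in\mathcal T$, and $c\lambda(G)=4$ for every other $(4,6)$-cage. A recurring secondary task is to check that the degenerate cuts cannot have minimum size among cyclic cuts, or else reduce to the patch analysis above.
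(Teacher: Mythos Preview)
The paper does not contain a proof of this theorem: it is quoted from Do\v{s}li\'{c}~\cite{Tom} as a known result and used as background, so there is no ``paper's own proof'' to compare your proposal against.

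That said, your outline follows the standard route (essentially the one in~\cite{Tom}): bound $c\lambda(G)$ from above by the $k$-gon cut, then for $k\in\{4,5\}$ assume a smaller cyclic cut, pass to the two planar patches, and use the Euler identity $(6-k)a_i=\ell_i+6-2m$ to force very short boundaries, which are then eliminated (or, for $k=4$, shown to yield the tube family $\mathcal T$). The derivation of the identity and the upper bounds are correct. The parts you label ``degenerate configurations'' and ``classification of patches'' are where the real work lies; you have described them accurately but not carried them out, and some of the bounding steps are a bit loose (for instance, $a_1\le 6/(6-k)$ uses that split $k$-gons are not inner faces of either side, and the inequality $\ell_1\ge k$ needs the boundary walk to contain a cycle of $G$, which requires the non-degeneracy you are postponing). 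None of this is wrong in spirit, but a full proof would need those cases written out.
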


In this paper, we consider the anti-Kekul\'{e} number of connected cubic graphs including those with low cyclic
edge-connectivity. 
The following is our first major result. 


\begin{thm}\label{cubic}
If $G$ is a $2$-connected cubic graph, then $3\leq ak(G) \leq 4$.
\end{thm}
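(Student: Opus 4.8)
My plan is to establish the two inequalities $ak(G)\le 4$ and $ak(G)\ge 3$ separately, with the upper bound being the substantive part.

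For the \emph{lower bound} $ak(G)\ge 3$: suppose $S$ is an anti-Kekul\'e set with $|S|\le 2$. Since $G$ is 2-connected and cubic, deleting one or two edges keeps $G-S$ with minimum degree at least $1$ and, since $G-S$ is required to be connected, one checks directly that $G-S$ still has a perfect matching. Concretely, if $|S|=1$, then $G-S$ is a connected graph in which every vertex has degree $3$ except the two endpoints of the removed edge, which have degree $2$; by a parity/Tutte argument (or by the fact that a bridgeless cubic graph minus one edge still has a perfect matching, following from Petersen's theorem applied carefully), $G-S$ has a perfect matching, a contradiction. The case $|S|=2$ is similar but needs a short case analysis on whether the two edges share an endpoint. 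I would phrase this cleanly using Tutte's theorem: any $1$- or $2$-edge deletion that keeps the graph connected cannot create a Tutte violator, because an odd component of $(G-S)-X$ would have to be an odd component of $G-X$ up to very few edges.

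For the \emph{upper bound} $ak(G)\le 4$: the idea is to exhibit, in any $2$-connected cubic graph, a set of at most $4$ edges whose removal leaves a connected graph with no perfect matching. The natural target for "no perfect matching" is to create an odd component after removing a vertex, or more robustly to isolate a small odd structure. The cleanest route: pick any vertex $v$ with neighbors $x,y,z$; we want to delete a few edges so that in $G-S$ the vertex $v$ (or some nearby vertex) becomes a "pendant-like" obstruction forcing a Tutte violation while keeping connectivity. One workable construction: take a vertex $v$ and an edge $e=xy$ among... — more carefully, I would look for a short odd cycle or use the structure around a vertex: delete the three edges at $v$ except we cannot (that disconnects $v$); instead delete two of the three edges at $v$, say $vy,vz$, leaving $v$ of degree $1$ attached to $x$; then $v$ is forced to be matched to $x$, and I then delete one or two more edges near $x$ to make the remaining graph have an odd component or to kill the matching, while preserving connectivity through the remaining edges. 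The total is at most $4$. The main obstacle — and where I'd expect to spend most of the work — is guaranteeing that such a deletion keeps $G-S$ \emph{connected}; this is exactly where $2$-connectedness is used, and it likely requires choosing $v$ and the deleted edges with care (e.g., choosing $v$ not to be a cut-related vertex in some auxiliary sense, or routing connectivity through an ear-decomposition of $G$).

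The key steps, in order: (1) reduce "no perfect matching" to producing a Tutte violator via Tutte's theorem; (2) prove the lower bound by showing $1$ or $2$ edge deletions preserving connectivity cannot produce a Tutte violator; (3) for the upper bound, fix a vertex $v$ and analyze the local structure at $v$ and its neighbors, splitting into cases according to how the neighborhoods overlap and whether short cycles are present; (4) in each case, specify at most $4$ edges to delete so that $v$ (or a neighbor) forces a matching obstruction, and verify connectivity of $G-S$ using $2$-connectedness (an ear decomposition argument to re-route any path that used a deleted edge). I anticipate step (4)'s connectivity verification to be the crux, possibly needing the observation that a $2$-connected graph remains connected after deleting all but one edge at a degree-$3$ vertex together with one further well-chosen edge.
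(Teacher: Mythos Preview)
Your overall plan (Tutte's theorem for the lower bound, an explicit $4$-edge deletion for the upper bound) is the same as the paper's, but the concrete steps are missing or off-target in a couple of places.

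\textbf{Lower bound.} The case analysis you sketch (``$|S|=1$ is fine by Petersen, $|S|=2$ by a short case split'') is not carried out, and the assertion that a bridgeless cubic graph minus one edge always has a perfect matching already needs the parity/counting argument you are trying to avoid. The paper does this in one shot: take a minimum anti-Kekul\'e set $A$, a maximum Tutte violator $S$ for $G-A$, and show (i) $G-A-S$ has exactly $|S|+2$ odd components (here minimality of $A$ is used, since adding back any $e\in A$ must restore a perfect matching), and (ii) each odd component sends at least $3$ edges out in $G$ (oddness forces the boundary to be odd; $2$-edge-connectedness forces it to be $\ge 2$). Double-counting edges into $S$ then gives $3(|S|+2)-2|A|\le 3|S|$, i.e.\ $|A|\ge 3$. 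Your approach would ultimately need exactly these two ingredients.

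\textbf{Upper bound construction.} Making a single vertex $v$ pendant and then ``deleting one or two more edges near $x$'' is not yet a Tutte obstruction; you have not said which extra edges or why a violator appears. The paper's construction is sharper and you should adopt it: pick a vertex $a$ with two neighbours $b,c$ and delete the four edges at $b$ and $c$ other than $ab,ac$. Now $b$ and $c$ are both pendants adjacent to $a$, so $\{a\}$ is a Tutte violator and $G-E_a$ has no perfect matching. (If you unwind your idea with $v$ pendant at $x$ and then also make another neighbour $p$ of $x$ pendant, you arrive at exactly this set with $a=x$, $b=v$, $c=p$.)

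\textbf{Connectivity.} You correctly identify this as the crux, but an ear-decomposition rerouting is too vague here; the deleted edges are clustered around two adjacent vertices and a generic ear argument does not obviously survive removing all four. The paper instead splits on connectivity. If $G$ is $3$-connected, then for \emph{any} choice of $a$ the set $E_a$ cannot separate $G$: a separating subset of $E_a$ of size $3$ or $4$ would force $\{ab,e_3\}$ or $\{ab,ac\}$ to be a $2$-edge-cut. If $G$ has connectivity exactly $2$, one cannot pick $a$ arbitrarily; instead choose a $2$-edge-cut whose smaller side $G'$ is minimum, and build the four edges inside $G'$ so that any edge-cut produced by their removal would contain an edge of $G'$ lying in a $2$-edge-cut of $G$, contradicting the minimality of $G'$. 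This extremal choice is the missing idea in your step~(4).
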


Since a leapfrog fullerene is 3-connected, Theorem~\ref{thm:leap} is a direct corollary of Theorem~\ref{cubic}. For bipartite cubic graphs, the  result can be strengthened as follows.

\begin{thm}\label{cubicbipartite}
If $G$ is a connected cubic bipartite graph, then $ak(G)=4$.
\end{thm}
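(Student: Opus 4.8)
The plan is to establish the two bounds of Theorem~\ref{cubicbipartite} separately: the lower bound $ak(G)\ge 4$ for a connected cubic bipartite graph, and the upper bound $ak(G)\le 4$, which is already supplied by Theorem~\ref{cubic} once we know a connected cubic bipartite graph is $2$-connected in the relevant sense (a connected cubic bipartite graph has a perfect matching by K\"onig's theorem, and bridgeless cubic graphs are $2$-edge-connected; in fact one should first argue $G$ is bridgeless, since a cubic graph with a bridge has no perfect matching at all and then $ak(G)$ is undefined/zero, so the hypothesis tacitly forces $G$ bridgeless, hence $2$-connected as a cubic graph, or one handles connectivity directly). So the real content is the lower bound: no anti-Kekul\'e set of a cubic bipartite graph can have size $1$, $2$, or $3$.

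For the lower bound I would proceed by contradiction: suppose $S$ is an anti-Kekul\'e set with $|S|\le 3$, so $G-S$ is connected and has no perfect matching. Since $G$ is bipartite, apply the parity/deficiency version of Hall's theorem (the K\"onig--Ore--Egerv\'ary deficiency formula, or Tutte--Berge for bipartite graphs): $G-S$ having no perfect matching means there is a subset $A$ of one colour class with $|N_{G-S}(A)|<|A|$. In the original cubic graph $G$, counting edges between $A$ and $N_G(A)$ gives $3|A|\le 3|N_G(A)|$ since every vertex has degree $3$ and $G$ is bipartite with equal colour classes; in fact since $G$ itself has a perfect matching we have $|N_G(A)|\ge |A|$, and more strongly one gets an inequality like $3|A|\le$ (edges leaving $A$) $\le 3|N_G(A)|$. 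Deleting at most $3$ edges can reduce the neighbourhood of $A$ only if those deleted edges sever all connections from some vertices of $N_G(A)$ to $A$; a degree count shows that to drop $|N(A)|$ below $|A|$ one must essentially isolate a vertex or create a small "barrier", and each such reduction costs at least $3$ deleted edges per vertex removed from the neighbourhood — but then also connectivity of $G-S$ is destroyed, contradicting that $S$ is an anti-Kekul\'e set. The cleanest route is probably: show that in a $2$-connected cubic bipartite graph, any edge set $S$ with $|S|\le 3$ such that $G-S$ has no perfect matching must disconnect $G-S$. Equivalently, any $S$ with $G-S$ connected and $|S|\le 3$ leaves a perfect matching; this can be shown by the "deleting $\le 3$ edges from a cubic graph while keeping it connected leaves every vertex with degree $\ge 2$ and at most three vertices of degree $2$" structure, then invoking a matching-extension lemma for bipartite graphs (a connected bipartite graph with min degree $\ge 2$ and only few degree-$2$ vertices, balanced, has a perfect matching — provable via Hall: a violating set $A$ would have all edges from $A$ going into $N(A)$, and $\sum_{v\in A}\deg v \le \sum_{v\in N(A)}\deg v$ forces $3|A|\le 3|N(A)| - (\text{slack from low-degree vertices in }N(A))$, and the small number of such vertices cannot make up a full unit deficit without also disconnecting the graph).

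The main obstacle I expect is ruling out $|S|=3$ precisely: with three deleted edges one has just enough room to create a degree-$2$ vertex or a "near-cut" configuration, so the argument that a Hall violator forces a disconnection needs care about the interaction between the bipartite class sizes, the placement of the $\le 3$ deleted edges, and $2$-connectivity. I would handle this by a careful case analysis on how the (at most three) deleted edges meet the Hall-violating set $A$ and its neighbourhood $N_{G-S}(A)$: letting $e(A, N_G(A))=3|A|$ in $G$ and $e_{G-S}(A,N_{G-S}(A))\ge 3|A|-3$, while $N_{G-S}(A)$ has total degree $\le 3|N_{G-S}(A)|$ in $G-S$, one gets $3|A|-3 \le 3|N_{G-S}(A)|$, so $|N_{G-S}(A)|\ge |A|-1$; the deficit is at most one. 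Then $|N_{G-S}(A)|=|A|-1$ and equality throughout forces: all three deleted edges go between $A$ and $N_G(A)$, the set $N_G(A)\setminus N_{G-S}(A)$ is a single vertex $w$ whose three edges all enter $A$ and all lie in $S$, so $w$ is isolated in $G-S$ — contradicting connectivity of $G-S$ (as $G$ is cubic bipartite with $\ge 4$ vertices). The cases $|S|\le 2$ are strictly easier versions of the same count. Finally, combine with Theorem~\ref{cubic} (noting a connected cubic bipartite graph with a perfect matching is bridgeless, hence $2$-connected as a cubic graph, giving $ak(G)\le 4$) to conclude $ak(G)=4$.
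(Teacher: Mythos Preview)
Your overall strategy matches the paper's: establish $2$-connectivity, invoke Theorem~\ref{cubic} for $3\le ak(G)\le 4$, and rule out $ak(G)=3$ by taking a Hall violator $A$ and using an edge count to force the putative anti-Kekul\'e set to be an edge-cut. Two specific steps, however, are not correct as written.

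First, your claim that ``a cubic graph with a bridge has no perfect matching at all'' is false (take two copies of $K_4$, subdivide one edge in each, and join the two subdivision vertices by an edge: this is cubic, has a bridge, and has a perfect matching). The paper proves bridgelessness differently: a cubic bipartite graph is $1$-factorable, so any edge lies in the even cycle formed by the union of two of the three disjoint perfect matchings, hence cannot be a bridge.

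Second, your final deduction from equality is wrong. From $3|A|-3=e_{G-S}(A,N_{G-S}(A))=3|N_{G-S}(A)|$ you cannot conclude that the three deleted edges share a single $B$-endpoint $w$. What the equalities actually force is: (i) all three edges of $S$ are incident to $A$; (ii) no edge of $S$ is incident to $N_{G-S}(A)$; and (iii) every edge of $G$ incident to $N_{G-S}(A)$ has its other end in $A$. From (i)--(iii) the correct conclusion is that $A\cup N_{G-S}(A)$ has no edges in $G-S$ to its complement, so $S$ is an edge-cut (the complement is nonempty since $|A\cup N_{G-S}(A)|=2|A|-1$ is odd while $|V(G)|$ is even). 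This is exactly the paper's endgame. Your ``isolated vertex $w$'' scenario is only the special case where the three $S$-edges happen to share a $B$-endpoint; in general they may go to two or three distinct vertices of $B\setminus N_{G-S}(A)$, each of which then retains edges to $W\setminus A$.

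With these two fixes your argument is correct and essentially the same as the paper's. One minor difference: the paper uses minimality of the anti-Kekul\'e set (adding back any $e_i$ restores a perfect matching) to pin down $|N_{G-S}(A)|=|A|-1$, whereas your direct count $3|A|-|S|\le 3|N_{G-S}(A)|$ gives this without invoking minimality and simultaneously disposes of $|S|\le 2$.
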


Theorems~\ref{cubic} and \ref{cubicbipartite} can be applied to determine the anti-Kekul\'{e} numbers of boron-nitrogen fullerenes, (3,6)-fullerenes, toroidal and bipartite Klein-bottle fullerenes (see Section 3 for details). 
Based on 
Theorems \ref{cubic} and \ref{cubicbipartite}, a polynomial time algorithm is given to find all smallest anti-Kekul\'{e} sets of a connected cubic graph $G$ in Section 4.

\section{Proofs of main results}

The well-known theorem of Tutte is essential to our proof of the main results.

\begin{thm}[Tutte's Theorem \cite{Tutte47}]\label{tutte} A graph $G$ has a perfect matching if and only if $c_{o}(G-U) \leq |U|$ for any $U \subseteq V(G)$, where
$c_{o}(G-U)$ is the number of odd components of $G-U$.
\end{thm}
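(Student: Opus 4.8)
The plan is to establish both implications, treating the ``only if'' direction as routine and concentrating the real work on the ``if'' direction via an edge-maximality argument. For necessity, suppose $G$ has a perfect matching $M$ and fix an arbitrary $U \subseteq V(G)$. Each odd component $C$ of $G-U$ has odd order, so $M$ cannot match all vertices of $C$ internally; hence at least one vertex of $C$ is matched by $M$ to a vertex outside $C$, which, since $C$ is a component of $G-U$, must lie in $U$. These matching edges are pairwise disjoint and therefore meet distinct vertices of $U$, giving $c_{o}(G-U) \le |U|$.

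For sufficiency I would argue by contradiction, choosing $G$ to be edge-maximal subject to satisfying Tutte's condition while having no perfect matching. This is legitimate for two reasons. First, adding an edge can only merge two components, and merging two components never increases the number of odd components (merging two odd components drops the count by two, while the other cases leave it unchanged), so Tutte's condition is preserved under edge addition and a maximal such $G$ exists; moreover $G$ is not complete, since $K_{2m}$ has a perfect matching, and taking $U=\emptyset$ forces $|V(G)|$ to be even. Let $U$ be the set of vertices of $G$ adjacent to every other vertex; then $U \ne V(G)$, so $G-U$ has components. The crux is the claim that every component of $G-U$ is a complete graph.

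To prove the claim I would suppose some component contains vertices $a,b,c$ with $ab,bc \in E(G)$ but $ac \notin E(G)$; since $b \notin U$ there is a further vertex $d$ with $bd \notin E(G)$. By maximality, $G+ac$ and $G+bd$ have perfect matchings $M_1 \ni ac$ and $M_2 \ni bd$ (neither edge can be avoidable, else $G$ already had a perfect matching), and I would analyze the alternating cycles of $M_1 \triangle M_2$. If $ac$ and $bd$ lie on different cycles, then replacing $M_1$ by $M_2$ along the cycle through $ac$, namely $M = (M_1 \setminus E(C_1)) \cup (M_2 \cap E(C_1))$ where $C_1$ is that cycle, yields a perfect matching using neither $ac$ nor $bd$, hence a perfect matching of $G$. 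If instead both edges lie on a common alternating cycle $C$, I would traverse $C$ starting from the genuine edge $ba$ or $bc$ at $b$ (whichever respects the alternation) to split $C$ into two paths whose $M_1$- and $M_2$-edges recombine into a perfect matching of $G$ avoiding both added edges. Either way $G$ has a perfect matching, contradicting its choice, so the claim holds.

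Finally, assuming every component of $G-U$ is complete, I would construct a perfect matching of $G$ explicitly: match each even component internally, leave exactly one vertex from each odd component and pair it with a distinct vertex of $U$ (possible because $c_{o}(G-U)\le |U|$ and every vertex of $U$ is adjacent to all vertices), and match the remaining vertices of $U$ among themselves. A parity count using $|V(G)| \equiv |U| + c_{o}(G-U) \pmod 2$ shows the leftover subset of $U$ has even size and so can be matched inside the clique $U$. This perfect matching contradicts the choice of $G$, completing the argument. The main obstacle is the completeness claim, and within it the case where $ac$ and $bd$ share an alternating cycle, where one must use the real edges $ab,bc$ at $b$ to splice the two matchings together correctly.
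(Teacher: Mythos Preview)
The paper does not prove Tutte's Theorem at all: it is quoted as Theorem~\ref{tutte} with a citation to \cite{Tutte47} and then used as a tool in the proof of Theorem~\ref{cubic}. So there is no ``paper's own proof'' to compare your proposal against.

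That said, your proposal is the standard Lov\'asz edge-maximality proof and is essentially correct. The necessity direction is fine, and for sufficiency your choice of $U$ as the set of universal vertices, the completeness claim for components of $G-U$, and the final explicit construction are all sound; the parity check $|V(G)|\equiv |U|+c_o(G-U)\pmod 2$ is exactly what is needed to match up the leftover clique on $U$. The only place where your write-up is genuinely sketchy is, as you yourself flag, the case where $ac$ and $bd$ lie on the same alternating cycle $C$ of $M_1\triangle M_2$. Your description ``traverse $C$ starting from the genuine edge $ba$ or $bc$ at $b$\dots'' gestures at the right idea but does not pin down the construction. The clean way is: starting from $c$, walk along $C$ away from $a$ until you first meet $b$ or $d$; the edges of $M_2$ on this subpath $P$, together with the edges of $M_1$ off $P$, and if necessary one of the real edges $bc$ or $ab$ to close things up, give a perfect matching of $G$ avoiding both $ac$ and $bd$. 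If you expand that paragraph into an explicit case split (first hitting $b$ versus first hitting $d$), the argument is complete.
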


By Petersen's Theorem~\cite{petersen}, every cubic graph without bridges has a perfect matching. Therefore, the anti-Kekul\'{e} number of a 2-connected cubic graph $G$ (note that a cubic graph possesses the same connectivity and edge-connectivity) is at least one, that is,  $ak(G)\geq 1$. Indeed, this lower bound can be improved and we present the proof by using Tutte's Theorem. For $X\subseteq V(G)$, let $\partial(X)$ denote the
set of edges with one end in $X$ and the other
end in $V(G)-X$. We also denote $d(X)=|\partial(X)|$.\medskip

\noindent{\emph{Proof of Theorem~\ref{cubic}}.}
Let $A$ be an anti-Kekul\'{e} set of size $ak(G)$. According to the definition, $G':=G-A$ has no perfect matchings. Hence, Theorem~\ref{tutte} implies that there exists $S\subseteq V(G')$ such that $c_{0}(G'-S)>|S|$.  Choose such an $S$ with the maximum size.

\noindent{\bf Claim 1. } {\it $G'-S$ has no even components and it has exactly $|S|+2$ odd components.}

 Suppose by the contrary, $G'-S$ has an even component $H$. Thus, for any given vertex $v\in V(H)$, $H-\{v\}$ has at least one odd component. Let $S'=S\cup \{v\}$, hence $G'-S'$ has at least $c_{0}(G'-S)+1$ odd components. That is, $c_{0}(G'-S')\geq c_{0}(G'-S)+1>|S|+1=|S'|$, contradicting the choice of $S$. Therefore $G'-S$ has no even component.

 Since $G$ is a cubic graph, it has an even number of vertices. This implies that $c_{0}(G'-S)|$ and $|S|$ are of the same parity,  thus $c_{0}(G'-S)|\geq |S|+2$.  For any edge $e\in A$, since $A$ is an anti-Kekul\'{e} set with the smallest cardinality, $G'+e$ has a perfect matching (note that $G'+e$ is the graph with vertex set $V(G')$ and edge set $E(G')\cup \{e\}$). Hence $c_{o}(G'+e-S)\leq |S|$ by Theorem~\ref{tutte}. Moreover, adding any edge $e$ to $G'-S$ will connects at most two odd components.
  Therefore, $|S|\geq c_{o}(G'+e-S)\geq |c_{0}(G'-S)|-2\geq |S|$ and thus, $c_{0}(G'-S)|-2=|S|$.

\noindent
{\bf Claim 2. } {\it Let $G_{i}$, where $1\leq i \leq |S|+2$, be  the odd components of $G'-S$. We have}
\begin{equation}\label{inequality}
\sum_{i=1}^{|S|+2}d(G_{i})-2ak(G)\leq 3|S|.
\end{equation}



We count the number of edges between $S$ and the odd components, which is denoted by $N$, in two different ways. On one hand, $S$ contributes at most $3|S|$ to $N$. On the other hand, all the odd components send out $\sum_{i=1}^{|S|+2}d(G_{i})-2ak(G)$ edges to $N$. Thus $\sum_{i=1}^{|S|+2}d(G_{i})-2ak(G)= N \leq 3|S|$ and the claim holds.

 Since $G$ is 2-edge-connected, $d(G_{i})\geq 2$ for every $i$. By a simple computation $d(G_{i})=3|V(G_{i})|-2|E(G_{i})|$, which implies that $d(G_{i})$ and $|V(G_{i})|$ are of the same parity. Since every $G_{i}$ is an odd component, $|V(G_{i})|$ is odd and hence $d(G_{i})$ is odd, therefore  $d(G_{i})\geq 3$.

Substituting this inequality into Equation (\ref{inequality}), we have $$3(|S|+2)-2ak(G)\leq \sum_{i=1}^{|S|+2}d(G_{i})-2ak(G)\leq 3|S|,$$ and so $ak(G)\geq 3$.

Now we are going to establish an upper bound on $ak(G)$ and it is sufficient to find an anti-Kekul\'{e} set of size 4. Let $a\in V(G)$ and let $b$ as well as $c$ be its two distinct neighbors. Denote the two edges incident with $b$ other than $ab$ by $e_{1}$ and $e_{2}$, similarly, denote the two edges incident with $c$ other than $ac$ by $e_{3}$ and $e_{4}$.  Hence, removing $E_{a}=\{e_{1}, e_{2}, e_{3}, e_{4}\}$ from $G$ will obtain a subgraph without perfect matchings. Therefore, $E_{a}$ is an anti-Kekul\'{e} set if  $G-E_{a}$ is connected  (there exists some vertex $a$ such that $G-E_{a}$ is not connected, see Fig.~\ref{notconnected}). Consider the following two cases according to the different connectivities.

\begin{figure}[H]
\begin{center}
\includegraphics[totalheight=4cm]{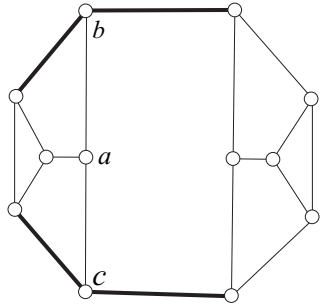}
\caption{\label{notconnected}\small{$E_{a}$ are the bold edges.}}
\end{center}
\end{figure}

\noindent
{\bf Case 1. } {\it $G$ is $3$-connected.}

We are going to prove that for any vertex $a$ in $G$, $G-E_{a}$ is connected. Suppose by the contrary that $G-E_{a}$ is not connected. Then the vertices of $G$ are divided into two parts $X$ and $\overline{X}$ with a subset $E'\subseteq E_{a}$ connecting them. Since $G$ is 3-connected, $E'$ consists of three or four edges in $E_{a}$. If it contains three edges, by symmetry, we assume $E'=\{e_{1}, e_{2}, e_{3}\}$, then $\{ab, e_{3}\}$ is a 2-edge-cut and a contradiction occurs (note that a 2-edge-cut is an edge-cut of size 2). If $E'$ contains four edges, that is $E'=E_{a}=\{e_{1}, e_{2}, e_{3}, e_{4}\}$, then $\{ab, ac\}$ forms a 2-edge-cut, which is a contradiction.

As a result, $G-E_{a}$ is connected and $G-E_{a}$ has no perfect matchings. Therefore, $E$ is an anti-Kekul\'{e} set of size 4  and we have $ak(G)\leq 4$.

\noindent
{\bf Case 2. }{\it $G$ has connectivity $2$.}

Since $G$ is 2-connected but not 3-connected, there exist 2-edge-cuts. Moreover, each 2-edge-cut is an independent set, otherwise the third edge adjacent to them is a bridge, which contradicts that $G$ is 2-connected. Every 2-edge-cut will split $G$ into exactly two subgraphs. Among those subgraphs, denote the one with the smallest cardinality by $G'$ and  the corresponding 2-edge-cut by $E=\{e_{4}, e_{5}\}$. Also, denote the end-vertices of $e_{4}$ and $e_{5}$ in $G'$ by $v$ and $u$, respectively. Moreover, let $G''$ be the other subgraph obtained by deleting $E$.

\noindent{\bf Claim 3.} {\it $uv\notin E(G)$.}

Assume $uv \in E(G)$. Then the edges incident with $u$ or $v$ other than $uv$, $e_{4}$ and $e_{5}$ form a 2-edge-cut. The deletion of this 2-edge-cut creates a subgraph with  cardinality smaller than $G'$, contradicting the choice of $E$.

\noindent{\bf Claim 4.} {\it No $2$-edge-cut of $G$ contains an edge $e\in E(G')$.}

Suppose the claim is false and there exists  $e \in E(G')$ that lies in some 2-edge-cut $E'=\{e, e'\}$. No matter where $e'$ lies in, the subgraph induced by $V(G'')\cup \{u\}$ or $V(G'')\cup \{v\}$ belongs to a component created by the deletion of $E'$. Thus the cardinality of the other component is smaller  than $G'$, which contradicts  the choice of $E$ and  the claim holds.

Let $s$ be a neighbor of $v$ in $G'$. Since $s$ is of degree 3, there exists a neighbor $t$ $(\neq u)$ of it in $G'$. Let $e_{1}$ and $e_{2}$ be two incident edges of $t$ other than $st$, and let $e_{3}$ be the edge incident with $v$ other than $sv$ and $e_{2}$. We claim that $\{e_{1}, e_{2}, e_{3}, e_{4}\}$ is an anti-Kekul\'{e} set. It is obvious that $G-\{e_{1}, e_{2}, e_{3}, e_{4}\}$ has no perfect matchings.  If $G-\{e_{1}, e_{2}, e_{3}, e_{4}\}$ is not connected, then,  similar to Case 1, we obtain a 2-edge-cut containing at least one edge in $G'$. This is a contradiction and completes the proof.

\vspace{.3cm}

The condition ``2-edge-connected'' in Theorem~\ref{cubic} is necessary because there exists cubic graphs with bridges and their anti-Kekul\'{e} number is less than 3 (see Fig.~\ref{012}). More precisely, we have the following result.

\begin{thm}\label{cubic-bridge}
If $G$ is a connected cubic graph with bridges, then $ak(G)\leq 2$.
\end{thm}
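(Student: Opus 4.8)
The plan is to exhibit an anti-Kekul\'e set of size at most $2$ by exploiting a bridge of $G$. First I would pick a bridge $e=xy$ of $G$, so that $G-e$ has exactly two components, say $G_x$ containing $x$ and $G_y$ containing $y$. Since $G$ is cubic, $x$ has exactly two further neighbours inside $G_x$; call the two edges joining them to $x$ by $f_1$ and $f_2$. The key observation is that $G-\{f_1,f_2\}$ has no perfect matching: in $G-\{f_1,f_2\}$ the vertex $x$ has degree $1$, with $e$ as its only incident edge, so any perfect matching $M$ must use $e$; but then $M$ restricted to $G_x-x$ must be a perfect matching of $G_x-x$, which is impossible since $|V(G_x)|$ and $|V(G_x-x)|$ have different parities only when $|V(G_x)|$ is even — so I need to be slightly more careful and choose the bridge side with an even number of vertices (or argue via Tutte directly).

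Here is the cleaner route I would actually take. Among all bridges of $G$, consider the component structure: deleting all bridges of $G$ breaks it into $2$-edge-connected blocks joined in a tree-like fashion, and at least one ``leaf block'' $B$ is attached to the rest of $G$ by a single bridge $e=xy$ with $x\in B$. Each cubic graph has an even number of vertices; since $B$ meets only one bridge, every vertex of $B$ except $x$ has all three incident edges inside $B$, while $x$ has two edges inside $B$ and the bridge $e$ leaving it. Thus $|E(B)|$ counted by degrees gives $2|E(B)| = 3(|V(B)|-1)+2$, forcing $|V(B)|$ to be odd, hence $|V(B)|-1$ is even. Now take $S=\{x\}\subseteq V(G)$ and delete the two edges $f_1,f_2$ of $B$ incident with $x$; call this set $A=\{f_1,f_2\}$, which has size $2$. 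I would then check two things: (i) $G-A$ is connected, since removing $f_1,f_2$ leaves $B-x$ still connected (as $B$ is $2$-edge-connected, $B-\{f_1,f_2\}$ cannot be disconnected in a way that isolates part of $B-x$ from $x$ through... actually $B-x$ may split, so instead I delete $f_1$ and the bridge-incident structure is fine because the rest of $G$ together with $x$ via $e$ stays connected — I must verify $B-\{f_1,f_2\}$ has $x$ in its own singleton component but $B-x$ stays connected, which follows if $x$ is not a cut vertex of $B$; if it is, pick the $f_i$ more carefully or reduce to a smaller leaf); and (ii) $G-A$ has no perfect matching, via Tutte's Theorem (Theorem~\ref{tutte}): with $U=\emptyset$ or $U=\{x\}$, the component $B-x$ of $G-A-\{x\}$ has an odd number of vertices... wait, $|V(B)|-1$ is even. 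Let me instead use $U=\emptyset$: in $G-A$, $x$ is isolated, so it is an odd component by itself, and $V(G)\setminus\{x\}$ may be even or split further; in any case $c_o(G-A)\ge 1 > 0 = |U|$ as long as the complement contributes at least one odd component too, which it does because $|V(G)|$ is even so the rest has odd total... no: $|V(G)|-1$ is odd, but it could be one even component.

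So the genuinely robust argument is: take $U=\{x\}$ in $G-A$. Then $G-A-U = G - \{f_1,f_2\} - x$; the vertices of $B-x$ form components whose total vertex count is $|V(B)|-1$ (even), while $V(G)\setminus V(B)$ together induces the rest of $G$ minus the bridge, connected via nothing to $B-x$, forming components of total size $|V(G)|-|V(B)|$. Since $|V(G)|$ is even and $|V(B)|$ is odd, $|V(G)|-|V(B)|$ is odd, so that part contains at least one odd component; also $|V(B)|-1\ge 2$ if $|V(B)|\ge 3$, and one shows the $B-x$ side splits into at least two components (since both $f_1,f_2$ were removed and $B$ is small-girth-controlled... ) — if it splits into $\ge 2$ components of even total size, at worst they are $2$ even components, giving $0$ odd; but I can also add the neighbours of $x$ in $B$ to $U$ to force odd components. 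The cleanest fix: let $U$ be $\{x\}$ union the set of at most two neighbours of $x$ in $B$, so $|U|\le 3$, and then $G-A-U$ has enough isolated-ish small pieces; comparing parities and using that removing a small $U$ from the even-order side creates at least $|U|+1$ odd components. \textbf{The main obstacle} is precisely this connectivity/parity bookkeeping — ensuring $G-A$ stays connected while simultaneously Tutte's condition fails — and I expect the honest proof simply picks $A=\{f_1,f_2\}$ where $f_1,f_2$ are the two non-bridge edges at an endpoint $x$ of a bridge chosen on the side with an odd number of vertices, notes $x$ becomes a degree-one vertex whose neighbour via the bridge then cannot be saturated consistently (Tutte with $U=\emptyset$ splits off the isolated $x$ plus forces an odd component on the other side by parity since the bridge is now the only link and the two sides have opposite parity), and handles connectivity by observing the bridge endpoint lies in a $2$-edge-connected block so its removal-of-two-edges can be arranged not to disconnect; if $x$ is a cut vertex of its block one instead moves to an even smaller leaf block or uses a single pendant-type reduction, making $ak(G)\le 2$ in every case.
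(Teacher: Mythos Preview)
Your proposed set $A=\{f_1,f_2\}$, the two non-bridge edges at the bridge endpoint $x$, can \emph{never} be an anti-Kekul\'e set: it always disconnects $G$. In $G-A$ the vertex $x$ has degree~$1$, its unique incident edge being the bridge $e=xy$; so $x$ lies in the component containing $G_y$, while the entire set $V(G_x)\setminus\{x\}$ has no edge to $x$ (you removed both) and no edge to $G_y$ (the only crossing edge was the bridge, which meets $G_x$ only at $x$). Thus $G_x-x$ is a nonempty union of components of $G-A$ separated from the rest, regardless of whether $B$ is $2$-edge-connected or whether $x$ is a cut vertex of $B$. Your later remarks (``removal-of-two-edges can be arranged not to disconnect'', ``move to a smaller leaf block'') do not address this: the disconnection is forced by the very definition of $f_1,f_2$. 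Along the way you also write ``in $G-A$, $x$ is isolated'', which is false for the same reason --- $x$ still carries the bridge --- and this is why the Tutte bookkeeping with $U=\emptyset$ or $U=\{x\}$ never closes.

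The paper's proof avoids this by shifting one step away from the bridge endpoint. Choose the bridge $e$ so that one side $G'$ is smallest, let $u\in V(G')$ be the endpoint of $e$, pick a neighbour $v$ of $u$ inside $G'$, and take $A=\{e_1,e_2\}$ to be the two edges at $v$ other than $uv$. Now $v$ still reaches $u$ via $uv$, and $u$ reaches the far side via $e$ and also keeps a third edge into $G'$; the minimality of $G'$ (exactly the Case~2 argument of Theorem~\ref{cubic}) then shows $G-A$ is connected. For the matching part no Tutte computation is needed: in a cubic graph each side of a bridge has odd order, so every perfect matching of $G$ contains the bridge $e$; hence $uv\notin M$, so $v$ must be covered by $e_1$ or $e_2$, and $G-\{e_1,e_2\}$ has no perfect matching. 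The single missing idea in your attempt is this one-vertex shift from $u$ to $v$.
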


\begin{proof}
Choose a bridge such that the deletion of it will give a subgraph with the smallest cardinality, we denote this subgraph by $G'$ and the corresponding bridge by $e$.  Let the end vertex of $e$ in $G'$ be $u$ and let $v$ be a neighbor of $u$ in $G'$. Moreover, let the two other edges incident with $v$ other than $uv$ be $e_{1}$ and $e_{2}$. Similar to the proof of Case 2 in Theorem~\ref{cubic}, we have $G-\{e_{1}, e_{2}\}$ is connected. Since any bridge separates $G$ into two odd components, any perfect matching $M$ of $G$ should contain $e$. Also, $M$ contains one edge in $\{e_{1}, e_{2}\}$ and thus, $G-\{e_{1}, e_{2}\}$ has no perfect matchings. As a result,  $\{e_{1}, e_{2}\}$ is an anti-Kekul\'{e} set and so $ak(G)\leq 2$.
\end{proof}

Fig.~\ref{012} presents three cubic graphs with anti-Kekul\'{e} numbers 0, 1 and 2, and the sets of bold edges denote the smallest anti-Kekul\'{e} sets respectively.

\begin{figure}[H]
\begin{center}
\includegraphics[totalheight=7cm]{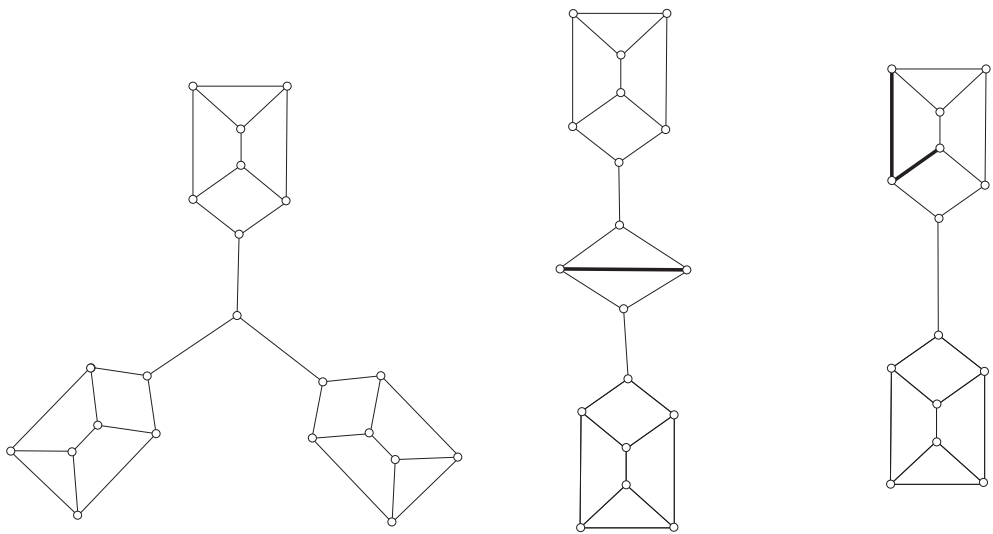}
\caption{\label{012}\small{Three cubic graphs with anti-Kekul\'{e} numbers 0, 1 and 2 respectively.}}
\end{center}
\end{figure}

If the graphs being considered are bipartite, then a stronger result can be obtained by using Hall's Theorem.


\begin{thm}[Hall's Theorem~\cite{Hall35}] \label{hall}
Let $G$ be a bipartite graph with bipartition $W$ and $B$. Then $G$ has a perfect matching if and only if $|W|=|B|$ and for any $U\subseteq W$, $|N(U)|\geq |U|$ holds.
\end{thm}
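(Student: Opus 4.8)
The plan is to treat the two implications separately; the forward implication is immediate, while the reverse one is the classical marriage theorem, which I would establish by induction on $|W|$.

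For necessity, suppose $G$ carries a perfect matching $M$. Every edge of $M$ joins one vertex of $W$ to one of $B$ and all vertices are covered, so $|W|=|B|$. Given $U\subseteq W$, the matching sends each $u\in U$ to a distinct partner in $B$ adjacent to $u$, hence lying in $N(U)$; distinct vertices of $U$ receive distinct partners, so $|N(U)|\ge|U|$. This direction needs no further work.

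For sufficiency, I would assume $|W|=|B|$ together with $|N(U)|\ge|U|$ for all $U\subseteq W$. Since the two sides have equal size, any matching saturating $W$ is automatically perfect, so it suffices to produce a $W$-saturating matching, which I would do by induction on $|W|$, the cases $|W|\le 1$ being trivial. For the inductive step I split according to the slack in Hall's condition. If every nonempty proper subset $U\subsetneq W$ satisfies the strict inequality $|N(U)|\ge|U|+1$, I pick any edge $wb$ (which exists because $|N(\{w\})|\ge 1$), match $w$ to $b$, and delete both vertices; in the smaller graph each $U\subseteq W\setminus\{w\}$ loses at most the single neighbor $b$, so $|N(U)|$ drops by at most one and Hall's condition survives. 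Induction then saturates $W\setminus\{w\}$, and adjoining $wb$ saturates $W$.

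The substantive case---and the step I expect to be the main obstacle---is when some nonempty proper subset $U_0\subsetneq W$ is tight, i.e.\ $|N(U_0)|=|U_0|$. Here I would split $G$ into the subgraph $G_1$ on $U_0\cup N(U_0)$ and the subgraph $G_2$ on $(W\setminus U_0)\cup(B\setminus N(U_0))$. In $G_1$ Hall's condition is inherited directly, since the neighbors of any $U\subseteq U_0$ stay inside $N(U_0)$; by induction $G_1$ has a matching saturating $U_0$, and as $|U_0|=|N(U_0)|$ it also saturates $N(U_0)$. The delicate point is verifying Hall's condition in $G_2$: for $U\subseteq W\setminus U_0$ one has $N_{G_2}(U)=N_G(U)\setminus N(U_0)$, and applying the hypothesis to $U\cup U_0$, together with the identity $N_G(U\cup U_0)=N_G(U)\cup N(U_0)$ and the tightness $|N(U_0)|=|U_0|$, yields $|N_{G_2}(U)|\ge|U|$ after cancelling $|U_0|$ from both sides. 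Induction then saturates $W\setminus U_0$ in $G_2$; since $G_1$ and $G_2$ share no vertices, the union of the two matchings saturates all of $W$, completing the induction. Should the bookkeeping in the tight-set case prove awkward, an alternative is to take a maximum matching and, assuming it misses some $w_0\in W$, collect the vertices reachable from $w_0$ by $M$-alternating paths; the set $U$ of reached $W$-vertices then satisfies $|N(U)|=|U|-1$, directly contradicting Hall's condition.
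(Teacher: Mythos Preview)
Your proof is correct and is the standard textbook argument for Hall's marriage theorem. However, the paper does not supply its own proof of this statement: Theorem~\ref{hall} is stated with a citation to Hall's original 1935 paper and is used as a black box in the proof of Theorem~\ref{cubicbipartite}. There is therefore nothing to compare against; you have written out a full proof where the authors simply quote a classical result.
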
\medskip


\noindent{\emph{Proof of Theorem~\ref{cubicbipartite}}.}
First we show that a connected cubic bipartite graph is essentially 2-connected. By Theorem~\ref{hall}, a $k$-regular bipartite graph contains a perfect matching. Removing that perfect matching will result in a $(k-1)$-regular bipartite graph, and the same argument can be applied repeatedly. Finally, we deduce that a $k$-regular bipartite graph can be decomposed into $k$ disjoint perfect matchings. 
 Since $G$ is a cubic bipartite graph, it can be decomposed into three disjoint perfect matchings $M_{1}$, $M_{2}$ and $M_{3}$, that is, $E(G)=M_{1}\cup M_{2}\cup M_{3}$. For any $e\in E(G)$, without loss of generality, let $e\in M_{1}$. Since $M_{1}$ and $M_{2}$ are disjoint perfect matchings of $G$, $M_{1}\cup M_{2}$ consists of disjoint even cycles and $e$ lies in one of them. Hence $e$ is not a bridge and $G$ is 2-edge-connected. Furthermore, a 2-edge-connected cubic graph is 2-connected, thus $G$ is 2-connected. 

According to Theorem~\ref{cubic}, we have $3\leq ak(G)\leq 4$.
Suppose by the contrary that $ak(G)\neq4$, that is, $ak(G)=3$. Let $A=\{e_{1}, e_{2}, e_{3}\}$ be an anti-Kekul\'{e} set.  Then $G-A$ has no perfect matchings. Assume $W$ and $B$ are the bipartition of $G$. According to Hall's theorem, there exists $S\subseteq W$ such that
\begin{equation}\label{a}
|N_{G-A}(S)|\leq |S|-1.
\end{equation}

On the other hand, since $A$ is an anti-Kekul\'{e} set with the smallest cardinality, we have
\begin{equation}\label{b}
|S|\leq|N_{G-A+e_{i}}(S)|
 \end{equation}
 for $i =1, 2$ and $3$.
Adding an edge $e_{i}$ to $G-A$ will increase the neighbors of $S$ by one (at most). Hence
 \begin{equation}\label{c}
 |N_{G-A+e_{i}}(S)|\leq |N_{G-A}(S)|+1.
  \end{equation}
 Combining inequalities (\ref{a}), (\ref{b}) and (\ref{c}), we obtain   $|S|=|N_{G-A}(S)|+1$. Let $S'=N_{G-A}(S)$.  The edges going out from $S$ are divided into two parts: either goes into $A$ or goes into $S'$. Thus the number of edges between $S$ and $S'$ are $3|S|-3$. Since $|S'|=|S|-1$, there is no edge between $S'$ and $W-S$. Therefore, $A$ is an edge-cut, which contradicts  the definition of anti-Kekul\'{e} set.

\section{Applications}


In this section, we apply Theorems~\ref{cubic} and \ref{cubicbipartite} to obtain the anti-Kekul\'{e} numbers of several
families of interesting graphs, such as boron-nitrogen fullerenes and $(3,6)$-fullerenes.

\begin{thm}
If $G$ is a $(4,6)$-fullerene, then $ak(G)=4$.
\end{thm}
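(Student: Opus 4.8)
The plan is to apply Theorem~\ref{cubicbipartite} directly, so the whole argument reduces to verifying that a $(4,6)$-fullerene is a connected cubic bipartite graph. Connectivity and cubicity are immediate from the definition of a $(k,6)$-cage (every vertex has degree $3$, and these graphs are $3$-connected), so the only real content is bipartiteness. First I would recall that a plane graph is bipartite precisely when every face is bounded by an even cycle. In a $(4,6)$-fullerene every face is a quadrangle or a hexagon, hence every face boundary has even length.

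From there the standard argument is to show the graph has no odd cycle. The key step is a parity count on the faces enclosed by an arbitrary cycle $C$ in the plane graph $G$: if $C$ encloses faces $f_1,\dots,f_m$, then the length of $C$ has the same parity as $\sum_{i=1}^m \ell(f_i)$, where $\ell(f_i)$ is the boundary length of $f_i$ (each interior edge is counted twice and cancels mod $2$, each edge of $C$ is counted once). Since every $\ell(f_i)$ is even, $|C|$ is even, so $G$ contains no odd cycle and is therefore bipartite. Once bipartiteness is established, Theorem~\ref{cubicbipartite} gives $ak(G)=4$ at once.

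I do not anticipate a serious obstacle here; the one point to handle carefully is making the ``parity of an enclosed region'' argument rigorous for a general (not necessarily $2$-connected, though here it is) plane graph, or alternatively citing the elementary fact that a connected plane graph all of whose faces have even size is bipartite. A cleaner route avoiding face-by-face induction is to use the dual-cycle description: in a plane graph, the cycle space is generated by the face boundaries, so the parity of any cycle is determined by the parities of the faces; all faces being even forces all cycles even. Either phrasing works, and the proof is short: establish even faces $\Rightarrow$ bipartite, then invoke Theorem~\ref{cubicbipartite}.
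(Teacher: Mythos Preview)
Your proposal is correct and matches the paper's proof exactly: the paper simply observes that a $(4,6)$-fullerene is bipartite and invokes Theorem~\ref{cubicbipartite}. You in fact supply more detail than the paper does, since the paper asserts bipartiteness without justification whereas you sketch the standard even-face/cycle-space argument.
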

\begin{proof}
Since $G$ is bipartite, the result follows immediately by Theorem~\ref{cubicbipartite}.
\end{proof}

Note that there are two classes of boron-nitrogen fullerenes, one with cyclic edge-connectivity 3 and the other with cyclic edge-connectivity 4. The anti-Kekul\'{e} number of the latter  can  be obtained by Theorem~\ref{ye13}. Now we are going to determine the anti-Kekul\'{e} number of (3,6)-fullerenes and the following lemma is required. A cyclic 3-edge-cut of a (3,6)-fullerene is called trivial if  it is formed by the edges incident to a triangle in common. A 3-edge-cut is called trivial if they are incident to a common vertex. Let $T_{n}$ $(n \geq 1)$ be the graph consisting of $n$ concentric layers of hexagons, capped on
each end by a cap formed by two adjacent triangles (see Fig.~\ref{3,6}).

\begin{Lemma}[\cite{Rui12}]\label{trivial}
(i) Every cyclic $3$-edge-cut of a $(3,6)$-fullerene with connectivity $3$ is trivial; (ii) The connectivity of a $(3,6)$-fullerene is $2$ if and only if it is isomorphic to $T_{n}$ for some $n\geq 1$.
\end{Lemma}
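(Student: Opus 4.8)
The plan hinges on one structural fact about $(3,6)$-fullerenes: combining Euler's formula with the identities $2|E|=3|V|$ and $2|E|=3f_3+6f_6$ (where $f_3,f_6$ count triangular and hexagonal faces) forces $f_3=4$. The whole proof then exploits this ``four triangles'' budget on both sides of a small cut. For part~(i), let $S=\{x_1,x_2,x_3\}$ be a cyclic $3$-edge-cut of a $3$-connected $(3,6)$-fullerene $G$, so $G-S=G_1\sqcup G_2$ with each $G_i$ containing a cycle; put $n_i=|V(G_i)|$ and assume $n_1\le n_2$. Since $G$ is cubic, $d(V(G_i))=3$ forces $n_i$ odd, and $n_1\ge 3$ since $G_1$ has a cycle. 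A $3$-connected cubic graph is $3$-edge-connected, so no two edges of $S$ meet at a vertex of $G_1$ (else their two mates would form a $2$-edge-cut); hence the endpoints $p_1,p_2,p_3$ of $x_1,x_2,x_3$ in $G_1$ are distinct, and each of the three faces of $G$ met by $S$ contributes a boundary path of length $\ell_i^{(1)}\ge 1$ inside $G_1$ and $\ell_i^{(2)}\ge 1$ inside $G_2$. I would cap $G_1$ by a new vertex $w$ joined to $p_1,p_2,p_3$, getting a cubic plane graph $\widehat{G}_1$ whose faces are the $a$ triangular and $b$ hexagonal faces of $G$ lying inside $G_1$, together with three new faces of sizes $2+\ell_i^{(1)}$. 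Euler's formula on $\widehat{G}_1$ gives $a+b=(n_1-1)/2$, and comparing $2|E(\widehat{G}_1)|$ with the sum of face sizes yields the key identity $\ell_1^{(1)}+\ell_2^{(1)}+\ell_3^{(1)}=3a$. In particular $a\ge 1$, and symmetrically $G_2$ contains $a'\ge 1$ triangular faces; moreover each face met by $S$ has size $2+\ell_i^{(1)}+\ell_i^{(2)}\ge 4$, so it is a hexagon, and therefore $a+a'=4$.

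We may assume $a\le a'$, so $a\in\{1,2\}$. If $a=1$ then $\ell_1^{(1)}=\ell_2^{(1)}=\ell_3^{(1)}=1$, so $p_1p_2p_3$ is a triangle of $G_1$ in which each $p_i$ already has degree $3$; thus $G_1$ is exactly this triangle, $n_1=3$, the triangle bounds a face of $G$, and $S$ is the set of its three legs, i.e. $S$ is trivial. The remaining case $a=a'=2$ is where I expect the real difficulty. Here each side, when capped, carries exactly two triangular faces and three new faces whose boundary lengths sum to $6$; the plan is to show, by tracking the identity $\sum\ell_i=3a$ layer by layer (equivalently, by examining a separating triangle of the dual triangulation, all of whose three vertices have degree $6$ while two degree-$3$ vertices lie strictly on each side and cannot be packed into a small disk triangulation without creating a $K_{3,3}$ or $K_5$ or a forbidden degree), that each side must be a stack of concentric hexagonal layers capped by two adjacent triangles. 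But then $G$ itself is such a stack capped at both ends, i.e. $G\cong T_m$ for some $m$; since $c\lambda(T_m)=2$ this contradicts $3$-connectivity, which disposes of the case and finishes~(i).

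For part~(ii), the implication ``$\Leftarrow$'' is immediate: each cap of $T_n$ consists of two triangles sharing an edge and is attached to the rest by exactly two edges, which therefore form a $2$-edge-cut, and $T_n$ is bridgeless, so $c\lambda(T_n)=2$. For ``$\Rightarrow$'', let $\{e,f\}$ be a $2$-edge-cut; since a minimal $2$-edge-cut of a plane graph is a pair of parallel dual edges, $e$ and $f$ lie on two common faces $F,F'$, and $G-\{e,f\}=G_1\sqcup G_2$. I would cap $G_1$ by joining the two cut-endpoints $p,q\in V(G_1)$ (distinct, as $G$ is bridgeless) by a new edge, obtaining a cubic plane graph whose faces are those of $G$ inside $G_1$ together with the ``$G_1$-halves'' of $F$ and $F'$. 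The same Euler count — now run with $d(V(G_i))=2$, so $n_i$ even — forces each side to contain exactly two of the four triangles and forces $F,F'$ to be hexagons, and then, by the same layer-by-layer bookkeeping (the face sizes being only $3$ or $6$ is precisely what pins each annular region between consecutive boundary cycles down to a cyclic chain of hexagons), that $G_1$ and $G_2$ are hexagonal stacks each ending in a two-triangle cap. Peeling layers, by induction on $|V(G)|$, then yields $G\cong T_n$. The routine but lengthy step throughout, and the one that carries the most risk of error, is this bookkeeping identifying the region between two consecutive boundary cycles as a ring of hexagons; everything else is the ``four triangles'' budget plus Euler's formula.
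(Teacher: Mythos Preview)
The paper does not prove this lemma; it is quoted from \cite{Rui12} (Yang and Zhang) and used as a black box, so there is no in-paper argument to compare against.

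On the merits of your outline: the overall strategy is sound and correctly isolates the governing constraint. The Euler count giving $f_3=4$, the capping of $G_1$ by a new degree-$3$ vertex, and the resulting identity $\ell_1^{(1)}+\ell_2^{(1)}+\ell_3^{(1)}=3a$ are all correct and are exactly the kind of bookkeeping one expects in the proof from \cite{Rui12}. Your disposal of the case $a=1$ is clean.

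The genuine gap is the one you yourself flag: the case $a=a'=2$ in part~(i), and the parallel ``layer-by-layer'' step in part~(ii). You assert that a side carrying exactly two triangular faces with $\sum\ell_i=6$ must be a hexagonal stack capped by two \emph{adjacent} triangles, but nothing you have written forces the two triangles on one side to be adjacent, nor forces each annulus between consecutive boundary cycles to be a cyclic chain of hexagons. The parenthetical about ``creating a $K_{3,3}$ or $K_5$ or a forbidden degree'' in the dual is a gesture, not an argument; planarity of the dual is automatic, so no $K_5$ or $K_{3,3}$ obstruction will appear, and the degree constraints alone do not obviously pin down the triangulation. To make this rigorous you need an honest inductive peeling lemma: show that the boundary cycle of the capped patch has length~$3$ (which you have), that the three faces incident to it inside $G_1$ are forced, and that removing the outermost ring leaves a smaller patch of the same type. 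Until that lemma is actually proved, both the $a=2$ case of~(i) and the forward direction of~(ii) are incomplete sketches rather than proofs.
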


\begin{thm}
If $G$ is a $(3,6)$-fullerene, then $ak(G)=3$.
\end{thm}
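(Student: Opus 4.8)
The plan is to show $ak(G)=3$ for a $(3,6)$-fullerene $G$ by proving the two inequalities separately, treating the connectivity-$2$ case (i.e.\ $G \cong T_n$) and the connectivity-$3$ case via Lemma~\ref{trivial}. For the lower bound, I would first dispose of the $T_n$ family: since each $T_n$ is $2$-connected cubic, Theorem~\ref{cubic} already gives $ak(T_n)\ge 3$; and if $G$ has connectivity $3$ it is in particular $2$-connected, so Theorem~\ref{cubic} again yields $ak(G)\ge 3$. Thus the real work is the upper bound $ak(G)\le 3$, which I would obtain by exhibiting an explicit anti-Kekul\'e set of size $3$.

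For the upper bound the natural candidate is the set $E_\triangle$ of three edges incident to a fixed triangle $\triangle = xyz$ of $G$ (every $(3,6)$-fullerene has triangles, in fact exactly four of them by Euler's formula). Removing $E_\triangle$ leaves the triangle $xyz$ as an isolated component $K_3$, which has an odd number of vertices, so $G-E_\triangle$ has no perfect matching regardless of what happens elsewhere. It therefore suffices to check that $G-E_\triangle$ is connected. First I would rule out the case where $G-E_\triangle$ is disconnected by its own structure: if $G-E_\triangle$ splits off more than just the triangle, then $E_\triangle$ is a (nontrivial in the relevant sense) $3$-edge-cut separating $G$ into the triangle on one side and everything else, and since all four triangles are on one side of this cut when $n\ge\ldots$, one checks this does not disconnect $G-E_\triangle$ beyond isolating $K_3$. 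More carefully: $E_\triangle$ is a cyclic $3$-edge-cut, and by Lemma~\ref{trivial}(i), when $G$ has connectivity $3$ every cyclic $3$-edge-cut is trivial, i.e.\ is exactly such a triangle-cut, which tells us $G-E_\triangle$ has exactly two components, the $K_3$ and a connected remainder. When $G$ has connectivity $2$, i.e.\ $G=T_n$, I would simply verify directly from the layered description in Fig.~\ref{3,6} that deleting the three edges joining a terminal triangle to the rest leaves a connected graph (the other terminal triangle and all the hexagon layers remain connected).

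The step I expect to be the main obstacle is showing that $G-E_\triangle$ is connected rather than merely that $E_\triangle$ separates off exactly the $K_3$; in the connectivity-$3$ case this is immediate from Lemma~\ref{trivial}(i) once one observes $E_\triangle$ is a cyclic $3$-edge-cut (the triangle is a cycle, and its complement contains the other three triangles, hence a cycle), but one must be slightly careful when a triangle could be adjacent to another triangle — here Euler's formula / the structure of $(3,6)$-fullerenes guarantees that a triangle-cut is genuinely a cyclic cut with a cycle on each side except in the degenerate small cases, which I would handle by hand. An alternative, cleaner route avoiding case analysis is to mimic Case~2 of the proof of Theorem~\ref{cubic}: pick the triangle and delete the three edges leaving it; if $G - E_\triangle$ were disconnected beyond the isolated $K_3$, we would extract a $2$-edge-cut of $G$, and combined with the fact that any $2$-edge-cut of $G$ (when $G=T_n$) or any $3$-edge-cut (when $G$ is $3$-connected) is well-understood, we reach a contradiction. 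Either way, once connectivity is established, $E_\triangle$ is an anti-Kekul\'e set of size $3$, giving $ak(G)\le 3$, and together with the lower bound we conclude $ak(G)=3$.
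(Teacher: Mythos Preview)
Your candidate $E_\triangle$ can never be an anti-Kekul\'e set, and this is not a technicality but a definitional failure. By definition, an anti-Kekul\'e set $S$ must leave $G-S$ \emph{connected}; you yourself note that removing $E_\triangle$ isolates the triangle as a $K_3$ component, so $G-E_\triangle$ has at least two components and is disconnected. The remainder of your argument is devoted to showing that $G-E_\triangle$ has ``exactly two components, the $K_3$ and a connected remainder'', but two components is still disconnected. So the entire upper-bound construction collapses.

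The paper avoids this by choosing a more subtle $3$-set. In the connectivity-$3$ case it takes a triangle $abc$ and sets $S=\{e_1,e_2,ac\}$, where $e_1,e_2$ are the edges leaving the triangle at $a$ and $c$: two outward edges plus one triangle edge, \emph{not} all three outward edges. Then $a$ and $c$ become pendant vertices both attached to $b$, so $G-S$ has no perfect matching; meanwhile $b$ still has its outward edge, so $S$ neither isolates a vertex nor a triangle, and by Lemma~\ref{trivial} (after checking every $3$-edge-cut of $G$ is trivial or trivially cyclic) $S$ is not an edge-cut at all, whence $G-S$ stays connected. For $G\cong T_n$ (connectivity $2$) the paper instead deletes the three edges \emph{of} a triangle that shares an edge with a second triangle; the shared-edge structure keeps $G-S$ connected while forcing two pendants onto a common neighbour. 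Your lower bound via Theorem~\ref{cubic} is fine; only the upper-bound construction needs to be replaced.
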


\begin{proof}
Let $G$ be a $(3,6)$-fullerene. Note that a $(3,6)$-fullerene has connectivity either 2 or 3. Thus, Theorem~\ref{cubic} implies that $3\leq ak(G)\leq 4$. To show that $ak(G)=3$, it suffices to give an anti-Kekul\'{e} set of size 3.


 First, assume that the connectivity of $G$ is 2. By 
Lemma~\ref{trivial}, $G$ has two triangles sharing a common edge. Let $S$ be the edge set of such a triangle (see  Fig.~\ref{3,6}). Then $G-S$ has two vertices of degree 1 adjacent to a common vertex. Hence $G-S$ has no perfect matching. Clearly, $G-S$ is connected. So $S$ is an anti-Kekul\'e set of size 3.

\begin{figure}[H]
\begin{center}
\includegraphics[totalheight=5.5cm]{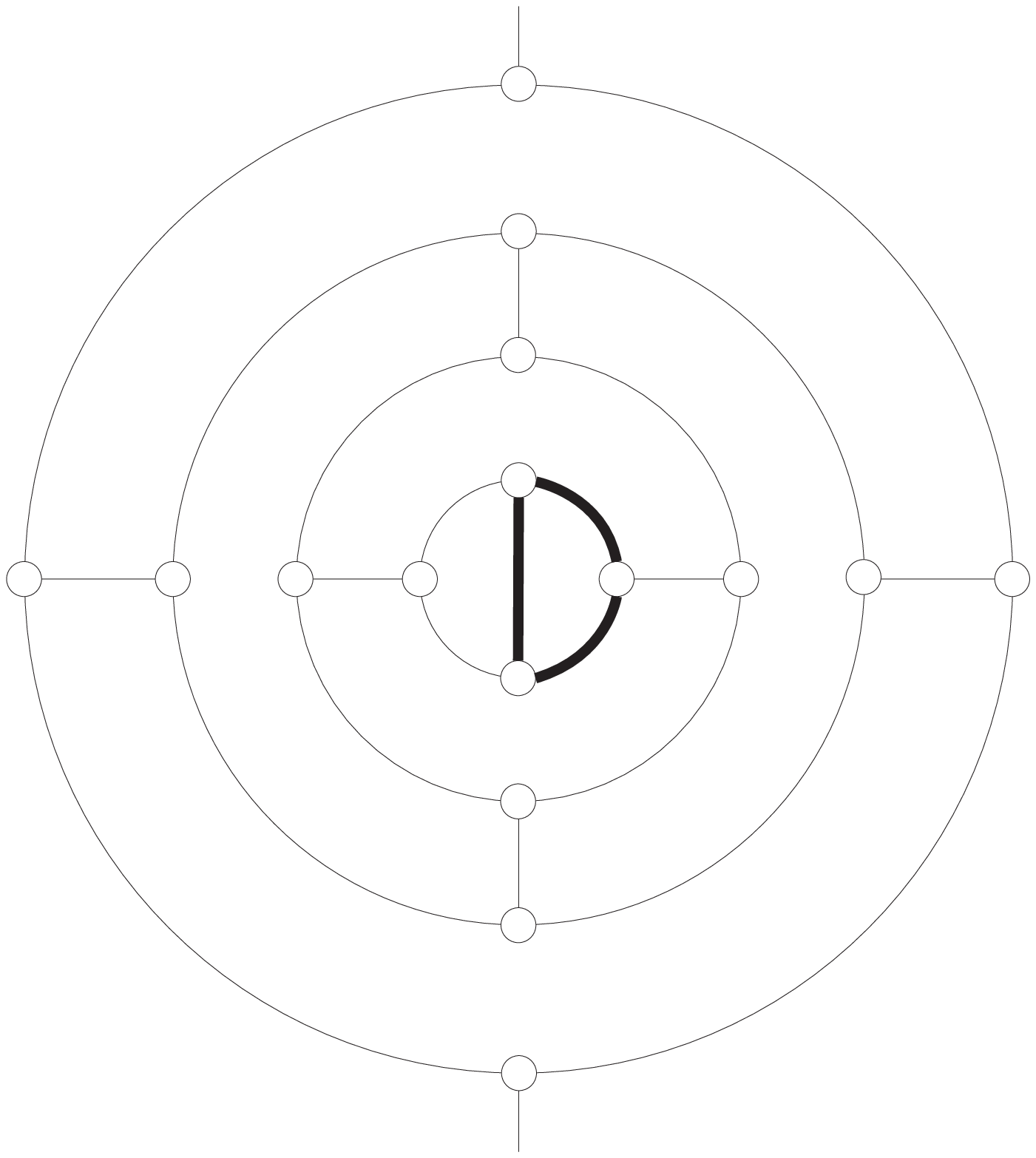}
\caption{\label{3,6}\small{A (3,6)-fullerene $T_{3}$ and the set of bold edges form an anti-Kekul\'{e} set of it.}}
\end{center}
\end{figure}

In the following, assume that $G$ is 3-connected. 
Let $S$ be a 3-edge-cut and let $G_{1}$ as well as $G_{2}$ be the two components of $G-S$. If $S$ is not trivial, then  $d(G_{i})=3$ and $|V(G_{i})|\geq 2$ for $i=1,2$. Since $|V(G_{i})|$ and $d(G_{i})$ are of the same parity, it follows that $|V(G_{i})|\geq 3$. Hence \[|E(G_{i})|-|V(G_{i})|=\frac{3|V(G_{i})|-3}{2}-|V(G_{i})|=\frac{|V(G_{i})|-3}{2}\geq 0.\] Therefore both $G_{1}$ and $G_2$ contain cycles. So $S$ is a cyclic 3-edge-cut. By Lemma \ref{trivial}, $S$ is a trivial cyclic 3-edge-cut.
Hence, a 3-edge-cut of $G$ is either a trivial 3-edge-cut or a trivial cyclic 3-edge-cut.

Let $abc$ be a triangle of $G$. Let $e_1$ be the edge incident with $a$ but not inside of the triangle, and $e_2$ be the edge  incident with $c$ but not contained in the triangle. The edge set $S=\{e_{1}, e_{2}, ac\}$ does not isolate a vertex or a triangle. So 
$S$ is not an edge-cut.  In the subgraph $G-S$, both $a$ and $c$ have degree 1 and both of them are adjacent to $b$. So $G-S$ has no perfect matching. Therefore, $S$ is an anti-Kekul\'{e} set. This completes the proof.
\end{proof}

Furthermore, since a toroidal fullerene or a bipartite Klein-bottle fullerene, whose definitions can be found in \cite{Dong09}, is a cubic bipartite graph, the following result is a direct consequence of Theorem~\ref{cubicbipartite}. Note that this result can also be deduced from Theorem~\ref{ye13}.

\begin{cor}
If  $G$ is either a toroidal fullerene or a bipartite Klein-bottle fullerene, then $ak(G)=4$.
\end{cor}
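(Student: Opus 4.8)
The plan is to reduce the statement entirely to Theorem~\ref{cubicbipartite}, so the only real content is verifying that both a toroidal fullerene and a bipartite Klein-bottle fullerene are connected cubic bipartite graphs; once that is in hand, $ak(G)=4$ follows at once. First I would recall from \cite{Dong09} that a toroidal fullerene is a cubic graph embedded on the torus in which every face is a hexagon, and a Klein-bottle fullerene is a cubic graph embedded on the Klein bottle with every face a hexagon; in the bipartite case we additionally have a proper $2$-colouring of the vertices. Both classes are connected by definition (they are obtained from a cellular embedding of a connected graph), and both are $3$-regular, so the one nontrivial hypothesis of Theorem~\ref{cubicbipartite} to confirm is bipartiteness for toroidal fullerenes.

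For the toroidal case I would argue that a hexagonal tiling of the torus is automatically bipartite: every face has even length, and on an orientable surface one can check that all cycles in the graph are even by expressing an arbitrary cycle as a $\mathbb{Z}_2$-sum of face boundaries plus (possibly) the two homology generators, and then observing that for the hexagonal torus the generating cycles themselves have even length. Alternatively, and more cleanly for the write-up, I would simply cite the known fact (see \cite{Dong09}) that toroidal fullerenes are bipartite, since this is part of their standard structure theory. For the bipartite Klein-bottle fullerenes the bipartiteness is built into the name/definition, so nothing further is needed there. With connectedness, $3$-regularity, and bipartiteness all established, Theorem~\ref{cubicbipartite} gives $ak(G)=4$ in both cases, which is exactly the assertion of the corollary.

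The only possible obstacle is the bookkeeping needed to justify that every toroidal fullerene is bipartite without quoting it as a black box; if one wants a self-contained argument, the cleanest route is the parity-of-cycles computation sketched above, using that a hexagonal embedding forces all facial walks to have length $6$ and that the first homology of the torus is generated by two cycles which, in the hexagonal case, can be taken of even length. Since the excerpt already remarks that the result can alternatively be deduced from Theorem~\ref{ye13} (toroidal and bipartite Klein-bottle fullerenes are cyclically $4$-edge-connected and have no odd cycles, so $\tau_{\mathrm{odd}}=0$ and the dichotomy forces $ak(G)=4$), I would mention that as a cross-check but present the Theorem~\ref{cubicbipartite} route as the main proof, since it is shorter.
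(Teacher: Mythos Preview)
Your proposal is correct and matches the paper's own argument: the paper simply observes (citing \cite{Dong09}) that toroidal fullerenes and bipartite Klein-bottle fullerenes are cubic bipartite graphs and then invokes Theorem~\ref{cubicbipartite}, exactly as you do, also noting Theorem~\ref{ye13} as an alternative route. The homology-based justification of bipartiteness you sketch is more than the paper provides, but it is unnecessary for the write-up since bipartiteness is treated as a cited fact.
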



\section{Finding all the smallest anti-Kekul\'{e} sets} 

The anti-Kekul\'e problem of graphs can be stated as follows.\medskip

\noindent{\bf Instance: } {\it A nonempty graph $G = (V,E)$ having a perfect matching and a positive $k$.}

\noindent{\bf Question: }{\it Dose there exist a subset $B\subseteq E$ with $|B|\leq k$ such that $G'= (V,E\setminus B)$ is connected and $G'$ has no Kekul\'e structure?}
\medskip

In \cite{Lv16}, the authors showed that  anti-Kekul\'e problem on bipartite graphs is NP-complete. 
So it is hard to find a smallest anti-Kekul\'e set of a given graph. However, for cubic graphs, the problem becomes much easier by Theorems~\ref{cubic} and \ref{cubicbipartite}: all the smallest anti-Kekul\'{e} sets of a cubic graph can be found in polynomial time. The algorithm finding all smallest anti-Kekul\'e sets $S$ of a cubic graph $G$ depends on how to find a maximum matching in the graph $G-S$. If the maximum matching of $G-S$ has size exactly $n/2$ where $n$ is the number of vertices of $G$, then it is a perfect matching of $G-S$.


%

For a given graph $G$ with $n$ vertices, Edmonds \cite{Edmonds} found an algorithm to find a maximum matching of $G$ in $O(n^4)$ steps, which is the  blossom algorithm. An efficient implementation of Edmonds' algorithm takes $O(n^3)$ steps to find a maximum matching  \cite{H. N. Gabow}. Later, Micali and Vazirani \cite{MV, V}, Gabow and Tarjan \cite{GT}, and Blum \cite{B} have given algorithms to find a maximum matching of $G$ in $O(\sqrt{n} m)$ steps, where $m$ is the number of edges of $G$.

\begin{thm}[\cite{MV, GT, B}]
Let $G$ be a graph with $n$ vertices and $m$ edges. It takes $O(\sqrt{n} m)$ steps to find a maximum matching of $G$. 
\end{thm}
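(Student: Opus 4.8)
\medskip
\noindent The statement above is classical, and a full proof is long; we sketch the approach one would take. The plan is to establish the bound via the augmenting-path paradigm, organized into \emph{phases} in the spirit of Hopcroft and Karp. The foundation is Berge's theorem: a matching $M$ of $G$ is maximum if and only if $G$ has no $M$-augmenting path. A single phase proceeds as follows: let $\ell$ be the length of a shortest $M$-augmenting path; compute an inclusion-maximal family $\mathcal{P}$ of pairwise vertex-disjoint $M$-augmenting paths, each of length exactly $\ell$; and replace $M$ by $M \triangle \big(\bigcup_{P \in \mathcal{P}} E(P)\big)$. Starting from $M=\emptyset$ and repeating phases until no augmenting path remains produces a maximum matching.

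The counting step is the generalization of the Hopcroft--Karp lemma to arbitrary graphs. If $M^{\ast}$ is a maximum matching with $|M^{\ast}|-|M|=k>0$, then $M\triangle M^{\ast}$ contains $k$ vertex-disjoint $M$-augmenting paths, so the shortest of them uses at most $n/k$ vertices; hence the shortest $M$-augmenting path in $G$ has length $O(n/k)$. One then shows that the shortest-augmenting-path length is strictly increasing from one phase to the next. Combining these facts, after $O(\sqrt{n})$ phases the shortest augmenting path has length $\Omega(\sqrt{n})$, so at most $O(\sqrt{n})$ further augmentations are possible; since every phase performs at least one augmentation, the total number of phases is $O(\sqrt{n})$. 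It therefore suffices to carry out one phase in $O(m)$ time.

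The hard part --- and the actual content of the cited theorems --- is implementing one phase in \emph{linear} time on a general, non-bipartite graph. In the bipartite case a phase reduces to a BFS that builds the layered graph of shortest alternating paths from the free vertices, followed by a DFS that greedily extracts vertex-disjoint shortest augmenting paths and deletes exhausted vertices; this is plainly $O(m)$. For general graphs the obstruction is blossoms: odd alternating cycles that Edmonds' matching algorithm \cite{Edmonds} resolves by contraction, but per-augmentation contraction is far too expensive for an $O(m)$ phase. The route of Micali and Vazirani \cite{MV, V} assigns to each vertex an even level and an odd level (the lengths of the shortest even and odd alternating walks from a free vertex) and to each non-tree edge a \emph{tenacity}; processing edges in order of increasing tenacity grows the alternating structure level by level, and an edge joining two vertices already at the relevant level signals a blossom whose base is located by a \emph{double depth-first search}. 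That same search, run from the free vertices, either outputs a disjoint shortest augmenting path or uncovers a new blossom, and an amortization argument shows each vertex and edge is touched $O(1)$ times per phase, yielding the $O(m)$ bound. One would present this level/tenacity bookkeeping together with the double DFS and verify the amortized linear cost; alternatively one may substitute the cost-scaling framework of Gabow and Tarjan \cite{GT} or Blum's \cite{B} augmenting-structure construction, each of which attains the same $O(\sqrt{n}\,m)$ running time by different means. In every route, the principal difficulty is precisely the correctness and linear-time analysis of the blossom-aware search that realizes one phase.
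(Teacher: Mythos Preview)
The paper does not prove this theorem at all: it is quoted as a known result from \cite{MV, GT, B} and used as a black box in the complexity analysis of the algorithm in Section~4. So there is no ``paper's own proof'' to compare against.

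Your sketch is a correct high-level outline of the classical argument --- Berge's theorem, Hopcroft--Karp phase structure, the $O(\sqrt{n})$ phase bound, and the linear-time-per-phase blossom machinery of Micali--Vazirani (or the alternatives of Gabow--Tarjan and Blum) --- and it accurately identifies where the real work lies. For the purposes of this paper, however, no proof is expected; the theorem is simply cited, and a sketch of this depth would be out of place in the surrounding text.
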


The connectedness of a graph $G$ with $n$ vertices can be determined by the breadth-first search (BFS) algorithm, which takes $O(n)$ steps. Based on Theorems~\ref{cubic} and \ref{cubicbipartite}, by applying the BFS algorithm and the maximum matching algorithm to $G-S$, we can find all smallest anti-Kekul\'e sets $S$ of a cubic graph $G$. 

\medskip

 \noindent{\bf Algorithm} (Finding all smallest anti-Kekul\'e sets)

\noindent{\bf Input: }{\it A cubic graph $G$ with $n$ vertices;}

\noindent{\bf Output: } {\it All the smallest anti-kekul\'e sets of $G$.} 







\begin{enumerate}[Step 1.]
\item Let $k=0$. Use the maximum matching algorithm on $G$. If $G$ has a maximum matching of size $n/2$, go to Step~2. Otherwise, $ak(G)=0$ and stop.  Then $\varnothing$ is the only smallest anti-Kekul\'e set of $G$.

\item Set $k\leftarrow k+1$. Screen all edge subsets $S$ of size $k$ and  let $\mathcal F_k:=\{S \, | |S|=k\mbox { and } S\subset E(G)\}$. Go to Step~3.

\item Choose an $S$ from $\mathcal F_k$, apply the BFS algorithm to find a spanning tree of $G-S$. If $G-S$ has no spanning tree, go to Step~4. Otherwise, apply the maximum matching algorithm to $G-S$. If $G-S$ has a maximum matching of size $n/2$, go to Step~4. Otherwise, label $S$ as a smallest anti-Kekul\'e set and go to Step~4. 

\item Set $\mathcal F_k\leftarrow \mathcal F_k\backslash \{S\}$. If $\mathcal F_k\ne \emptyset$, return to Setp~3. Otherwise, go to Step~5.

\item If there is no labelled edge set, go to Step~2. Otherwise, output all labelled sets and stop.
\end{enumerate}

 The screening process in Step 2 takes at most $m \choose k$ steps. By Theorem~\ref{cubic}, $k\le 4$. So the worst case takes $m\choose 4$ steps, which is $O(m^4)$ steps. It takes $O(n)$ steps to run BFS algorithm for $G-S$ and $O(\sqrt n m)$ steps to find a maximum matching of $G-S$. So for a given $S$, it takes at most $O(\sqrt n m)$ steps to determine whether it is an anti-Kekul\'e set or not. Therefore, the worest case takes $O(\sqrt n m^5)$ steps to find all smallest anti-Kekul\'e sets of $G$. 
Since  $G$ is a cubic graph, $m=3n/2$. So we have the following result.

\begin{thm}
 Let $G$ be a connected cubic graph with $n$ vertices. Then it takes   $O(n^{11/2})$ steps to find out all the smallest anti-Kekul\'e sets of $G$.
\end{thm}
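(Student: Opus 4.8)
The plan is to combine the complexity bounds of the two algorithmic ingredients—BFS for connectedness and the $O(\sqrt{n}\,m)$ maximum-matching algorithm—with the upper bound $ak(G)\le 4$ from Theorem~\ref{cubic} and the identity $m=3n/2$ for cubic graphs. First I would argue that the outer loop of the Algorithm runs for at most four values of $k$, since Theorem~\ref{cubic} (together with Theorem~\ref{cubic-bridge} for the bridge case, which only lowers $k$) guarantees $ak(G)\le 4$; hence the loop terminates with $k\le 4$ and there is no issue of the algorithm running indefinitely. This step is where the structural results of the paper do all the work, and it is essentially immediate once one observes that the loop in Steps~2--5 increments $k$ only when no anti-Kekul\'e set of the current size has been found.

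Next I would count the work done for a fixed $k$. The screening in Step~2 enumerates all $\binom{m}{k}$ edge subsets of size $k$; since $k\le 4$, this is $O(m^4)$ subsets in the worst case. For each such subset $S$, Step~3 runs BFS on $G-S$ in $O(n+m)=O(n)$ steps to test connectedness, and, if connected, runs the maximum-matching algorithm on $G-S$ in $O(\sqrt{n}\,m)$ steps to test for a perfect matching; comparing the size of a maximum matching against $n/2$ decides whether $G-S$ has a Kekul\'e structure. Thus each subset costs $O(\sqrt{n}\,m)$ steps, which dominates the $O(n)$ BFS cost. Summing over the $O(m^4)$ subsets and over the at most four values of $k$ gives a total of $O(m^4\cdot\sqrt{n}\,m)=O(\sqrt{n}\,m^5)$ steps.

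Finally I would substitute $m=3n/2$, valid because $G$ is cubic, so that $m^5=O(n^5)$ and $\sqrt{n}\,m^5=O(n^{11/2})$, yielding the claimed bound of $O(n^{11/2})$ steps. The correctness of the output—that the labelled sets are exactly the smallest anti-Kekul\'e sets—follows since the algorithm only labels a set $S$ when $G-S$ is connected and has no perfect matching, and it does so for the first $k$ at which such an $S$ exists, so all labelled sets have the common size $ak(G)$ and every anti-Kekul\'e set of that size gets screened and labelled.

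I do not expect a genuine obstacle here: the argument is a routine aggregation of known complexity estimates. The only point requiring a little care is making sure the worst-case count $\binom{m}{k}=O(m^4)$ is applied at the right $k$ and that the $O(n^{11/2})$ bound absorbs all lower-order contributions (the BFS cost, the enumeration cost $O(m^4)$ itself, and the at most four outer iterations), which it does since $\sqrt{n}\,m^5$ strictly dominates $m^4$ and $n\cdot m^4$ for cubic graphs. One should also note in passing that Theorem~\ref{cubic} applies to $2$-connected cubic graphs while the statement concerns connected cubic graphs, so the bridge case must be handled via Theorem~\ref{cubic-bridge}; in either case $ak(G)\le 4$, which is all the complexity analysis needs.
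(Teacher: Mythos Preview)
Your proposal is correct and follows essentially the same route as the paper: bound $k\le 4$, count $\binom{m}{k}=O(m^4)$ candidate sets, charge $O(\sqrt{n}\,m)$ per set for BFS plus maximum matching, and substitute $m=3n/2$ to obtain $O(n^{11/2})$. Your explicit treatment of the bridge case via Theorem~\ref{cubic-bridge} is a welcome clarification, since the paper's analysis invokes only Theorem~\ref{cubic} (stated for $2$-connected cubic graphs) while the theorem is stated for all connected cubic graphs.
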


\end{document}